\numberwithin{equation}{section}
\newtheorem{theorem}{Theorem}[section]
\newtheorem{definition}[theorem]{Definition} 
\newtheorem{lemma}[theorem]{Lemma}
\newtheorem{remark}[theorem]{Remark}
\newtheorem{example}[theorem]{Example}
\numberwithin{equation}{section}
\title{The Legendre Transform of Convex Lattice Sets}
\author{Tingting He and Lin Si \thanks{Correspondence should be addressed to Lin Si: silin@bjfu.edu.cn}}
\begin{document}
\date{}
\maketitle

   \begin{abstract}
  	
   The goal of this paper is to study convex lattice sets by the discrete Legendre transform. The definition of the polar of convex lattice sets in $\mathbb{Z}^n$ is provided. 
   It is worth mentioning that the polar of convex lattice sets have the self-dual property similar to that of convex bodies.
   Some properties of convex lattice sets are established, for instance, the inclusion relation, the union and intersection on the polar of convex lattice sets. In addition, we discuss the relationship between the cross-polytope and the discrete Mahler product. 
   It states that a convex lattice set is the cross-polytope if and only if its discrete Mahler product is the smallest.
   \end{abstract}

\textbf{Key words.}  Convex lattice sets; Polar; Legendre transform; Discrete Mahler product

\textbf{2020 Mathematics Subject Classification.} Primary 52C07; Secondary 11H06, 52B20.


\section{Introduction}\label{sec1}

A convex body $C$ in $n$-dimensional Euclidean space $\mathbb{E}^n$ is a compact convex subset, its polar body $C^*$ is defined by 
\begin{center}
	$C^* = \{x \in \mathbb{E}^n : \langle x,y \rangle \leq 1 $ for all $y$ in $C\},$
\end{center}
where $\langle x, y \rangle$ denotes the standard inner product of $x$ and $y$ in $\mathbb{E}^n$. 

The Blaschke-Santal$ \rm \acute{o}$ inequality is one of the most powerful results in convex geometry. It states that, 
$$V(K)V(K^*) \leq \frac{\pi ^ n}{\Gamma^{2}(1+\frac{n}{2})}$$
holds for any origin-symmetric convex body $K$, or more generally for any convex body $K$ with its centroid at the origin, with equality if and only if $K$ is an ellipsoid \cite{book5}. Here $V(\cdot)$ denotes the volume in $\mathbb{E}^n$.

Corresponding to the Blaschke-Santal$\rm \acute{o}$ inequality, the lower bound for $V(K)V(K^*)$, i.e., the Mahler conjecture, is still unsolved. 
The symmetric case of Mahler conjecture \cite{article9} asserts that, for any centrally symmetric convex body $K$, $$V(K)V(K^*) \ge \frac{4^n}{n!}?$$

For $n=2$, the conjecture has been verified by Mahler. Moreover, in \cite{article3}, 
a new proof of the Mahler conjecture in $\mathbb{E}^2$ is given by the vertex removal method.
Very recently, Iriyeh and Shibata \cite{article20} gave a new partial result of the non-symmetric case 
of Mahler conjecture for $n = 3$.
In higher dimensions, it is proved only in some very special cases, namely, for zonoids, \cite{article4}, for 1-unconditional bodies, \cite{article13}, and for the unit cube, \cite{article7}. In 1987, Bourgain and Milman \cite{article1} proved that there exists a positive constant $c$ such that 
$$V(K)V(K^*) \ge \frac{c^n}{n!},$$
which is now known as the Bourgain-Milman inequality.
The Mahler conjecture has been extensively studied, see, e.g., \cite{article21, article22, article23, article24}.

The Blaschkle-Santal$\rm \acute{o}$ inequality and Mahler conjecture consider upper and lower bounds for $V(K)V(K^*)$ in $\mathbb{E}^n$, respectively. We now turn our attention to the finite sets of integer lattice points. The geometry of integer lattice points is still an active field to research, see, e.g., \cite{article15, article14, article27, article28}.
We denote by $\mathbb{Z}^n$ the set of integer lattice in $\mathbb{E}^n$, i.e., the lattice of all points with integer coordinates. A finite set $K \subset \mathbb{Z}^n$ is a convex lattice set if $K = \mathrm{conv}(K) \cap \mathbb{Z}^n$, where $\mathrm{conv}(K)$ represents the convex hull of $K$.
Let $\#(K)\#(K_{\mathbb{Z}^n}^*)$ be the discrete Mahler product of convex lattice set $K$, where $\#(\cdot)$ denotes the number of integer lattice points and $K_{\mathbb{Z}^n}^*$ denotes the polar of convex lattice set $K$ in $\mathbb{Z}^n$ (See Definition \ref{key2}). 

Let $f:\mathbb{E}^n \rightarrow \overline{\mathbb{R}}$ be proper and convex, then the Legendre transform $f^* $ is defined by \cite{book1, book3}
\begin{align}\label{equation1}
	f^*(y) = \sup\{\langle x,y \rangle - f(x) : x \in \mathbb{E}^n \}, y \in \mathbb{E}^n,
\end{align}
where $\overline{\mathbb{R}} = \mathbb{R} \cup \{+ \infty\}$ and $\sup(\cdot)$ denotes supremum.

In \cite{article8, article12}, Murota considered the discrete Legendre transform in $\mathbb{Z}^n$. For a function $f: \mathbb{Z}^n \rightarrow \overline{\mathbb{Z}}$, 
the discrete Legendre transform $f^* : \mathbb{Z}^n \rightarrow \overline{\mathbb{Z}} $ is defined by 
\begin{align}\label{equation2}
	f^*(p) = \sup\{\langle x, p\rangle - f(x) : x \in \mathbb{Z}^n \}, p \in \mathbb{Z}^n,
\end{align}
where $\overline{\mathbb{Z}} = \mathbb{Z} \cup \{ + \infty \}$. 

For (\ref{equation2}), the geometric interpretation of the discrete Legendre transform $f^* : \mathbb{Z} \rightarrow \overline{\mathbb{Z}} $ states that when a line with slope $y$ is tangent to an integer point in $f(x)$ and intersects the $y$-axis, the opposite of the intercept of the line on the $y$-axis is obtained.

The paper is organized as follows. In Section 2, a new definition of the polar of convex lattice sets is given by the discrete Legendre transform. It should be remarked here that the new definition is essentially different from the definition of the polar of convex lattice sets in \cite{article2}. More precisely, the new definition of the polar has the self-dual property. In Section 3, more properties similar to those of the polar body of a convex body are established for the polar of convex lattice sets. Moreover, for the cross-polytope, we give a characterization by the discrete Mahler product.

\section{The polar of convex lattice sets}

We consider finite sets of integer lattice points which are necessarily full-dimensional unless indicated otherwise.
Denote by $\mathscr{K}_{\mathbb{Z}}^n$ the set of all convex lattice sets, having the origin as its interior. 
For $K \in \mathscr{K}_{\mathbb{Z}}^n$, let $K^v = \mathrm{vert}(\mathrm{conv}(K))$, where $\mathrm{vert}$ denotes vertices. That is, $K^v$ denotes all vertices of $\mathrm{conv}(K)$. A convex lattice set $K$ can be determined by vertices of its convex hull, i.e., $K = \mathrm{conv}(K^v) \cap \mathbb{Z}^n$. 

A polytope given as the convex hull of finitely many points is called a $V$-polytope \cite{book5}. Let $K \in \mathscr{K}_{\mathbb{Z}}^n$, we assume that $K^v = \{A_1, \cdots, A_{k}\}$, then
\begin{center}
	$K = \mathrm{conv}(K^v) \cap \mathbb{Z}^n = \mathrm{conv}\{A_1, \cdots, A_{k}\} \cap \mathbb{Z}^n,$
\end{center}
where $A_{r} = (a_{r 1}, \cdots, a_{r n}) \in \mathbb{Z}^n$ with $A_{r} \neq (0, \cdots, 0)$ for $1 \leq r \leq k$. 

Alternatively, if a polytope is given as the bounded intersection of finitely many closed halfspaces, then it is called an $H$-polytope \cite{book5}. More information about polytopes can be found in \cite{article31, article30}.
Let $K \in \mathscr{K}_{\mathbb{Z}}^n$. Assume that $K$ is bounded by the hyperplane $h_i$, and 
$h_i^-$ is the closed halfspace bounded by $h_i$ and containing $K$, then $K$ can be represented as
$$	K = \Big(\bigcap_{i = 1}^m h_i^- \Big) \bigcap \mathbb{Z}^n,$$
where
\begin{align}\label{eq1}
	h_i(x) = b_{i 1}x_1 + \cdots + b_{i (n - 1)}x_{n - 1} +  \beta_i, \  i = 1,\cdots, m,
\end{align}
with $b_{i j}, \beta_i \in \mathbb{Z}$ for $j = 1, \cdots, n - 1$.

For (\ref{eq1}) and $p \in \mathbb{Z}^{n - 1}$, by (\ref{equation2}), we get that
\begin{align*}
	h_i^*(p) & = \sup\{\langle x, p\rangle - h_i(x) : x \in \mathbb{Z}^{n - 1} \} \\
	& = \sup\{\langle x, p\rangle - (b_{i 1}x_1 + \cdots + b_{i (n - 1)}x_{n - 1} +  \beta_i) : x \in \mathbb{Z}^{n - 1} \},
\end{align*}
for $i = 1, \cdots, m.$

Fix $p_i = (p_{i 1}, \cdots, p_{i (n-1)})$, $i = 1, \cdots, m$. Consider 
\begin{align*}
	h_i^*(p_i) = \sup\{ \langle  x, p_i \rangle - (b_{i 1}x_1 + \cdots + b_{i (n - 1)}x_{n - 1} +  \beta_i) : x \in \mathbb{Z}^{n - 1} \},
\end{align*}
for $i = 1, \cdots, m.$
Then, we have
\begin{align*}
	\frac{\partial }{\partial x_j} \Big(\sum_{j = 1}^{n-1}p_{i j} x_j - h_i(x) \Big) = p_{i j} - \frac{\partial h_i(x)}{\partial x_j} = 0,
\end{align*}
and hence
\begin{align}\label{eq2}
	p_{ij} = \frac{\partial h_i(x)}{\partial x_j}, 
\end{align}
for $i = 1, \cdots, m$, $j = 1, \cdots, n - 1$.

Combining (\ref{eq1}) and (\ref{eq2}), we see that $p = p_i = (b_{i 1}, \cdots ,b_{i (n - 1)}),$ so
\begin{equation*}
	\begin{aligned}
		h_i^*(p) & = \sup\{\langle x, p_i \rangle - h_i(x) \} \\
		& = \sup\{(p_{i 1} - b_{i 1})x_1 + \cdots + (p_{i (n - 1)} - b_{i (n - 1)})x_{n - 1} - \beta_i\}  \\
		& = -\beta_i,
	\end{aligned}
\end{equation*}
for $i = 1, \cdots, m$.

If $p \neq (b_{i 1}, \cdots, b_{i (n - 1)})$, then
\begin{align*}
	h_i^*(p)  = \sup\{\langle x, p \rangle - h_i(x) \} 
	= +\infty,
\end{align*}
for $i = 1, \cdots, m$.

By the above argument, the discrete Legendre transform $h_i^*(p) : \mathbb{Z}^{n - 1} \rightarrow \overline{\mathbb{Z}}$ is given by
$$ 
h_i^*(p) =
\left\{
\begin{aligned}
	-\beta_i, \ \quad & p = (b_{i 1}, \cdots, b_{i (n - 1)}), \\
	+ \infty, \ \quad & {\rm{otherwise}},
\end{aligned}
\right.
$$
for $i = 1, \cdots, m$.

Clearly, $h_i^*(p)$ is a proper convex function in $\mathbb{Z}^{n - 1}$, which is characterized in \cite{article19}.

For $p = p_{i} = (b_{i 1}, \cdots, b_{i (n - 1)})$, we get that $h_i^* (p_i)= -\beta_i$. Let 
\begin{align}\label{eq21}
	B_i = (p_{i 1}, \cdots, p_{i (n - 1)}, h_i^*(p_{i})) = (b_{i1}, \cdots, b_{i (n - 1)}, -\beta_i) 
\end{align}
with $b_{i j}, \beta_i \in \mathbb{Z}$ for $i = 1, \cdots, m$, $j = 1, \cdots, n - 1$. 
Denote by $K_0 = \{ B_1, \cdots, B_m\}$.
Then, we let
\begin{align}\label{eq9}
	K_L = \mathrm{conv}(\bigcup_{i = 1}^{m} B_i) \cap \mathbb{Z}^n,
\end{align}
and therefore, we have $K_L^v \subseteq K_0 $.

For $A_{r} \in K^v$ and $B_i \in K_L$, let $\lambda$ satisfy the following conditions:
\begin{align}\label{a5}
	\lambda = \underset{1 \leq r \leq k}{\max} \Big\{ \underset{1 \leq i \leq m}{\min} \Big\{  |\lambda_i| \Big| \overrightarrow{OA_{r}} \cdot \lambda_i \overrightarrow{OB_i} \leq 1 \ {\rm{for}} \  i = 1, \cdots, m, \ r = 1, \cdots, k \Big\} \Big\},
\end{align} 
where $\overrightarrow{ \cdot }$ denotes the vector. 

Let $\overrightarrow{OC_i} = \lambda \overrightarrow{OB_i}$, $i = 1, \cdots, m$.
Denote by $\mathbb{Q}^n$ the set of $n$-tuple arrays of rational numbers, i.e., the set of points with rational coordinates. Then, we give a definition of the polar of a convex lattice set $K$ in $\mathbb{Q}^n$.

\begin{definition}\label{key1}
	The polar of a convex lattice set $K$ in $\mathbb{Q}^n$ is defined by
	$$K_{\mathbb{Q}^n}^* = \mathrm{conv} (\bigcup_{i = 1}^{m} C_i ).$$
\end{definition}

Following (\ref{eq21}) and Definition \ref{key1}, we can obtain
\begin{align}\label{eq22}
	K_{\mathbb{Q}^n}^* = \Big \{ \sum_{i = 1}^m \mu_i C_i \Big| \sum_{i = 1}^m \mu_i = 1, \mu_i \ge 0 \ {\rm{for}} \ 1 \leq i \leq m \Big \},
\end{align}
where $C_i = (\lambda b_{i1}, \cdots, \lambda b_{i (n - 1)}, -\lambda \beta_i)$ for $i = 1, \cdots, m$.

For $K_{\mathbb{Q}^n}^*$, the transform $ \mathcal{T}$ is defined by
\begin{align}\label{eq3}
	\mathcal{T}(K_{\mathbb{Q}^n}^*) = \frac{1}{\lambda}K_{\mathbb{Q}^n}^*.
\end{align}
Together with (\ref{a5}) and the fact that $K \in \mathscr{K}_{\mathbb{Z}}^n$, it is obvious that $\lambda \neq 0$. 

We define the polar of a convex lattice set $K$ in $\mathbb{Z}^n$ as follows.

\begin{definition}\label{key2}
	The polar of a convex lattice set $K$ in $\mathbb{Z}^n$ is defined by
	$$K_{\mathbb{Z}^n}^* = \mathrm{conv}(\mathcal{T}(K_{\mathbb{Q}^n}^*)) \cap \mathbb{Z}^n.$$
\end{definition}

\begin{example}\label{example1}
	For $K \in \mathscr{K}_{\mathbb{Z}}^2$ and $K^v = \{(-1,1),(0,2),(2,0),$ $(0,-2)\}$, by the discrete Legendre transform {\rm(}\ref{equation2}{\rm)}, Definition \ref{key1} and Definition \ref{key2}, we have
	$K_L^v =\{(1,-2),(-1,-2),(1,2),(-3,2)\},$ 
	$(K_{\mathbb{Q}^2}^*)^v = \{(\frac{1}{5},-\frac{2}{5}),(-\frac{1}{5},-\frac{2}{5}),(\frac{1}{5},\frac{2}{5}),$ $(-\frac{3}{5},\frac{2}{5})\},$ and 
	$(K_{\mathbb{Z}^2}^*)^v = \{(1,-2),(-1,-2),(1,2),(-3,2)\}$. 
	See Fig. \ref{Fig:figure2}.
	
	\begin{figure}[H]                       
		\includegraphics[width=1.0\textwidth]{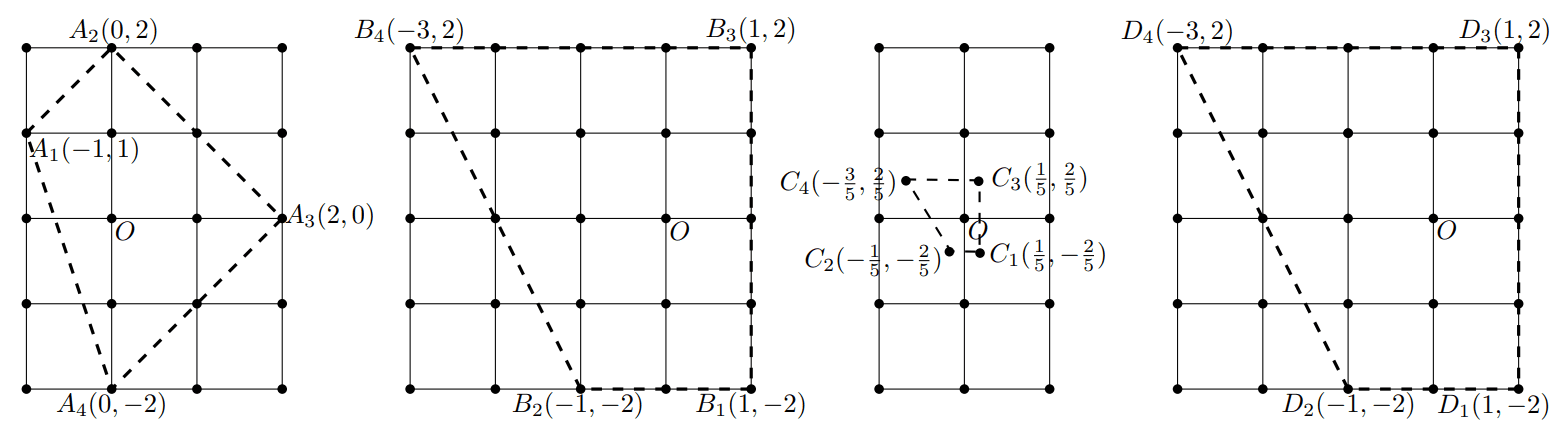}
		\caption{\label{Fig:figure2}$K$, \ $K_L$, \ $K_{\mathbb{Q}^2}^*$, \ $K_{\mathbb{Z}^2}^*$ (from left)}
	\end{figure}
\end{example}

\begin{remark}
	We abbreviate the discrete Legendre transform by $DLT$,
	and then the relations between $K$, $K_L$, $K_{\mathbb{Q}^n}^*$ and $K_{\mathbb{Z}^n}^*$ can be represented as follows
	$$
	K
	\xrightarrow{DLT}
	K_L 
	\xrightarrow{ \lambda  } 
	K_{\mathbb{Q}^n}^*
	\xrightarrow{ \mathcal{T}(K_{\mathbb{Q}^n}^*) }
	K_{\mathbb{Z}^n}^*.
	$$
\end{remark}

In \cite{article8}, for $f:\mathbb{Z}^n \rightarrow \overline{\mathbb{Z}}$, $f^*$ is an integer-valued function and $(f^*)^* = f$. A similar property also holds for a convex body $K$, namely $(K^*)^* = K$. In what follows, we consider the self-dual property of convex lattice sets in $\mathbb{Z}^n$.

\begin{theorem}\label{key3}
	If $K \in \mathscr{K}_{\mathbb{Z}}^n$ and $K$ is bounded by the hyperplane $h_i$, then
	$$(K_{\mathbb{Z}^{n}}^\ast)_{\mathbb{Z}^{n}}^\ast = K,$$
	where
	$$h_i(x) = b_{i 1}x_1 + \cdots + b_{i (n-1)}x_{n-1} +  \beta_i,$$ 
	with $b_{i j}, \beta_i \in \mathbb{Z}$ for $i = 1, \cdots, m$, $j = 1, \cdots, n - 1$.
\end{theorem}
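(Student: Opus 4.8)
The plan is to exploit the biconjugation identity $(f^*)^*=f$ for the discrete Legendre transform, transported onto the facet/vertex dictionary of $K$. The first move is to notice that the rational rescaling and the transform $\mathcal{T}$ are mutually inverse and hence cancel after a single application: since $C_i=\lambda B_i$, formulas (\ref{eq22}) and (\ref{eq3}) give $\mathcal{T}(K_{\mathbb{Q}^n}^*)=\frac{1}{\lambda}\,\mathrm{conv}\{\lambda B_1,\ldots,\lambda B_m\}=\mathrm{conv}\{B_1,\ldots,B_m\}$, so by Definition \ref{key2} and (\ref{eq9}) we get $K_{\mathbb{Z}^n}^*=\mathrm{conv}\{B_1,\ldots,B_m\}\cap\mathbb{Z}^n=K_L$. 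Thus it suffices to prove that applying the facet-to-point construction of Section 2 twice returns $K$; the parameter $\lambda$ serves only to normalize $K_{\mathbb{Q}^n}^*$ and contributes nothing to the combinatorics of the double polar.

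Next I would set up the explicit correspondence between the facets of $K$ and the vertices of $K_{\mathbb{Z}^n}^*$. By construction each bounding hyperplane $h_i$ yields the vertex $B_i=(b_{i1},\ldots,b_{i(n-1)},-\beta_i)$. The key incidence is that a vertex $A_r=(a_{r1},\ldots,a_{rn})\in K^v$ lies on $h_i$ precisely when $a_{rn}=b_{i1}a_{r1}+\cdots+b_{i(n-1)}a_{r(n-1)}+\beta_i$, equivalently when the last coordinate of $B_i$ satisfies $-\beta_i=a_{r1}b_{i1}+\cdots+a_{r(n-1)}b_{i(n-1)}-a_{rn}$. Introducing the affine function $H_r(y)=a_{r1}y_1+\cdots+a_{r(n-1)}y_{n-1}-a_{rn}$ on $\mathbb{Z}^{n-1}$, this says that every $B_i$ coming from a facet through $A_r$ lies on the graph of $H_r$. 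Running the computation of Section 2 in reverse, the transform (\ref{equation2}) of $H_r$ is finite only at $p=(a_{r1},\ldots,a_{r(n-1)})$, where $H_r^*(p)=a_{rn}$, so the point produced by $H_r$ is exactly $(a_{r1},\ldots,a_{r(n-1)},a_{rn})=A_r$. This is the manifestation of $(f^*)^*=f$ at the level of a single facet.

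The heart of the argument, and the step I expect to be the main obstacle, is to show that the $H_r$ are precisely the bounding hyperplanes of $\mathrm{conv}(K_{\mathbb{Z}^n}^*)$, with no spurious facets and with each $A_r$ producing a genuinely supporting, rather than merely interpolating, hyperplane of $\mathrm{conv}\{B_1,\ldots,B_m\}$. Concretely, one must verify that for each $r$ all $B_i$ satisfy $H_r(b_{i1},\ldots,b_{i(n-1)})\ge -\beta_i$ (or $\le$, with the sign fixed by the orientation of $h_i^-$), with equality exactly for the facets through $A_r$; this is the assertion that the facet–vertex incidence of $K$ is carried faithfully onto the vertex–facet incidence of its polar. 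This is where the hypotheses that the origin is interior to $K$ (so $\lambda\neq 0$ and the $\beta_i$ have a consistent sign) and that $K$ is full-dimensional are needed, and where parallel facets of $K$ demand care, since two facets sharing $(b_{i1},\ldots,b_{i(n-1)})$ produce two vertices $B_i$ differing only in the last coordinate.

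Once the facet structure is controlled, the conclusion is immediate: the second application sends the facets $H_r$ of $K_{\mathbb{Z}^n}^*=K_L$ back to the points $A_r$, and after the (again self-cancelling) rescaling by the new parameter $\lambda'$ and transform $\mathcal{T}$, Definition \ref{key2} gives $(K_{\mathbb{Z}^n}^*)_{\mathbb{Z}^n}^*=\mathrm{conv}\{A_1,\ldots,A_k\}\cap\mathbb{Z}^n=\mathrm{conv}(K^v)\cap\mathbb{Z}^n=K$, which is the desired self-duality. I would present the rescaling bookkeeping last, as it is routine once the combinatorial duality of the middle step is established.
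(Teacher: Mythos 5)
Your strategy is the same as the paper's: cancel the $\lambda$-rescaling against $\mathcal{T}$ to identify $K_{\mathbb{Z}^n}^*$ with $K_L=\mathrm{conv}\{B_1,\ldots,B_m\}\cap\mathbb{Z}^n$, then argue that the facet-to-point construction, applied a second time, inverts itself by biconjugation. Your incidence computation is correct ($B_i$ lies on the graph of $H_r$ if and only if $A_r$ lies on the graph of $h_i$, and $H_r^*$ reproduces $A_r$), and so is the rescaling bookkeeping. But the step you flag as the main obstacle is not merely unproven; it is false. From the identity $H_r(b_{i1},\ldots,b_{i(n-1)})+\beta_i=h_i(a_{r1},\ldots,a_{r(n-1)})-a_{rn}$, the side of the graph of $H_r$ on which $B_i$ lies is determined by whether $K$ lies below or above the graph of $h_i$. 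Because $\mathrm{conv}(K)$ is bounded with the origin in its interior, it must have facets of both kinds, so for a fixed $r$ the points $B_i$ arising from ``upper'' facets and those from ``lower'' facets sit on opposite sides of the graph of $H_r$, with equality only for the facets through $A_r$. Hence $H_r$ is generically not a supporting hyperplane of $\mathrm{conv}\{B_1,\ldots,B_m\}$ at all, and the facets of $\mathrm{conv}(K_L)$ are not in bijection with the vertices of $K$.

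The paper's own Example \ref{example1} exhibits the failure concretely. There $K^v=\{(-1,1),(0,2),(2,0),(0,-2)\}$ and $K_L^v=\{(1,-2),(-1,-2),(1,2),(-3,2)\}$. For the vertex $A_r=(2,0)$ your hyperplane is $H_r(p)=2p$: it contains $(-1,-2)$ and $(1,2)$, but $(1,-2)$ lies strictly below it and $(-3,2)$ strictly above it, so its graph is a diagonal of $\mathrm{conv}(K_L)$, not a facet; the same happens for $A_r=(-1,1)$. Moreover, $\mathrm{conv}(K_L)$ has the facet joining $(1,-2)$ to $(1,2)$, which lies in the vertical line $\{p=1\}$ (it is produced by the parallel pair of facets $x_2=x_1+2$ and $x_2=x_1-2$ of $K$, the very situation you set aside as needing care); a vertical hyperplane is not the graph of any affine function, so the Section 2 construction cannot even accept $K_{\mathbb{Z}^2}^*$ as input for the second transform. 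You should also know that this is precisely where the paper's proof is silent: after (\ref{a9}) it asserts that $(K_{\mathbb{Z}^n}^*)_L$ is ``bounded by the hyperplanes $h_i$'', which amounts to applying the vertex-to-hyperplane inverse map instead of the defined facet-to-point map. So your proposal faithfully reconstructs the paper's argument, including its unfixable step; a genuine completion would require either restricting to a class of sets for which the upper/lower mixing cannot occur, or changing the definition of the second polar so that it acts on vertices rather than facets.
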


\begin{proof}
	Let the vertices of convex lattice set $K$ be $K^v = \{A_1, \cdots, A_{k}\}$, where $A_{r} = (a_{r 1}, \cdots, a_{r n}) \in \mathbb{Z}^n $ with $A_{r} \neq (0, \cdots, 0)$ for $1 \leq r \leq k$.
	
	Combining (\ref{eq21}) and (\ref{eq9}), we have
	\begin{equation}\label{tend1}
		\begin{aligned}
			K_L = \mathrm{conv}(\bigcup_{i = 1}^{m} (b_{i 1}, \cdots, b_{i (n - 1)}, -\beta_i)) \cap \mathbb{Z}^n.
		\end{aligned}
	\end{equation}
	
	It follows from (\ref{a5}), (\ref{eq22}) and Definition \ref{key1} that there exists a suitable $\lambda_1(\lambda_1 \neq 0)$ such that
	\begin{align}\label{a7}
		K_{\mathbb{Q}^n}^* = \mathrm{conv}(\bigcup_{i = 1}^{m} \lambda_1 (b_{i 1}, \cdots, b_{i (n - 1)}, -\beta_i)).
	\end{align}
	
	We let 
	\begin{align*}
		K_{\mathbb{Q}^n}^* = \{ (q_{1 1}, \cdots, q_{1 n} ), \cdots,(q_{s 1}, \cdots, q_{s n} ), \cdots, (q_{l 1}, \cdots, q_{l n} )\},
	\end{align*}
	where $q_{st} \in \mathbb{Q}$ for $s = 1, \cdots, l$, $t = 1, \cdots, n$.
	
	By Definition \ref{key2}, we can get that
	\begin{align}\label{a8}
		K_{\mathbb{Z}^n}^* = \mathrm{conv} \Big \{ \Big (\frac{q_{1 1}}{\lambda_1 }, \cdots, \frac{q_{1 n}}{\lambda_1 } \Big), \cdots, \Big (\frac{q_{l 1}}{\lambda_1 }, \cdots, \frac{q_{l n}}{\lambda_1 } \Big) \Big \} \bigcap \mathbb{Z}^n.
	\end{align}
	Together with (\ref{a7}) and (\ref{a8}), we have
	\begin{align}\label{tend2}
		K_{\mathbb{Z}^n}^* = \mathrm{conv}\{ (b_{1 1}, \cdots, b_{1 (n - 1)}, -\beta_1), \cdots, (b_{m 1}, \cdots, b_{m (n - 1)}, -\beta_m)\} \cap \mathbb{Z}^n.
	\end{align} 
	
	Obviously, (\ref{tend1}) and (\ref{tend2}) yield that $K_L$ = $K_{\mathbb{Z}^n}^*$.
	Let 
	\begin{align*}
		K_0 = \{ (b_{1 1}, \cdots, b_{1 (n - 1)}, -\beta_1), \cdots, (b_{m 1}, \cdots, b_{m (n - 1)}, -\beta_m) \},
	\end{align*}
	and then, we get that any point in $K_0$ satisfies $h_i^*(p) = -\beta_i$, where $p = p_i = (b_{i 1}, \cdots, b_{i (n - 1)})$ for $i = 1, \cdots, m$. 
	If $p \neq (b_{i 1}, \cdots, b_{i (n - 1)}),$ then $h_i^*(p) = + \infty$.
	
	Therefore, $h_i^*(p) : \mathbb{Z}^{n - 1} \rightarrow \overline{\mathbb{Z}} $ is given by
	$$ 
	h_i^*(p) =\left\{
	\begin{aligned}
		-\beta_1,              \ \quad & p = (b_{i 1}, \cdots, b_{i (n - 1)}), \\
		+ \infty,              \ \quad & {\rm{otherwise,}}
	\end{aligned}
	\right.
	$$
	for $i = 1, \cdots, m$. Obviously, $h_i^*(p)$ is a proper convex function in $\mathbb{Z}^{n - 1}$.
	Then, for $h_i^*(p)$, by (\ref{equation2}), we have
	\begin{equation}\label{a9}
		\begin{aligned}
			((h_i(x))^*)^*  & = \sup\{\langle x, p \rangle - h_i^*(p ) : p \in \mathbb{Z}^{n - 1}\},x \in \mathbb{Z}^{n - 1} \\
			& = h_i(x),
		\end{aligned}
	\end{equation}
	for $i = 1, \cdots, m$.
	
	Thus, by (\ref{tend2}) and (\ref{a9}), we see that $(K_{\mathbb{Z}^n}^*)_L$ is also bounded by the hyperplanes $h_i$ for $i = 1, \cdots, m$. 
	Then, we obtain
	\begin{equation*}
		\begin{aligned}
			(K_{\mathbb{Z}^n}^*)_L^v = \{(a_{1 1}, \cdots ,a_{1 n}), \cdots, (a_{k 1},  \cdots, a_{k n})\},
		\end{aligned}
	\end{equation*}
	that is,
	\begin{align*}
		(K_{\mathbb{Z}^n}^*)_L = \mathrm{conv}( \bigcup_{r = 1}^{k} A_{r} ) \cap \mathbb{Z}^n,
	\end{align*}
	where $A_{r} = (a_{r 1}, \cdots, a_{r n})$ for $r = 1, \cdots, k$.
	
	Moreover, it follows from (\ref{a5}) and Definition \ref{key1} that there exists a suitable $\lambda_2(\lambda_2 \neq 0)$ such that
	\begin{equation*}
		\begin{aligned}
			(K_{\mathbb{Z}^n}^*)_{\mathbb{Q}^n}^* = \mathrm{conv}( \bigcup_{r = 1}^{k} \lambda_2 A_{r}).
		\end{aligned}
	\end{equation*}
	Therefore, by (\ref{eq3}) and Definition \ref{key2}, we get that
	\begin{equation*}
		\begin{aligned}
			(K_{\mathbb{Z}^n}^*)_{\mathbb{Z}^n}^* = \mathrm{conv}\{ (a_{11}, \cdots ,a_{1n}), \dots, (a_{k 1}, \cdots ,a_{k n})\} \cap \mathbb{Z}^n.
		\end{aligned}
	\end{equation*}
	
	Then, we have
	$$(K_{\mathbb{Z}^{n}}^*)_{\mathbb{Z}^{n}}^* = K.$$ 
\end{proof}

\begin{remark}
	The relations between the geometric objects in Theorem \ref{key3} can be represented as follows
	\begin{equation*}	
		\begin{aligned}
			K 
			\xrightarrow{DLT} 
			K_L
			\xrightarrow{ \lambda_1  } 
			K_{\mathbb{Q}^n}^*  
			\xrightarrow{ \mathcal{T}(K_{\mathbb{Q}^n}^*) } 
			K_{\mathbb{Z}^n}^*  
			\xrightarrow{DLT} 
			(K_{\mathbb{Z}^n}^*)_L
			\xrightarrow{\lambda_2  } 
			(K_{\mathbb{Z}^n}^*)_{\mathbb{Q}^n}^*
			\xrightarrow{\mathcal{T}(K_{\mathbb{Q}^n}^*) } 
			K \nonumber.
		\end{aligned}
	\end{equation*}
\end{remark}

\begin{remark}
	In Theorem \ref{key3}, if the coefficients of the hyperplane $h_i$ satisfy $b_{i j} \in \mathbb{Q}$, then the definition of the polar of a convex lattice set $K$ can be given by the Legendre transform, and hence $(K_{\mathbb{Z}^{n}}^*)_{\mathbb{Z}^{n}}^* = K$ can also be obtained. The proof is similar to that of Theorem \ref{key3}. 
	More precisely, the two results do not contain each other except for a common part. 
	\footnote{For $K_1 \in \mathscr{K}_{\mathbb{Z}}^2$ and $K_1^v = \{(0,6), (2,2), (0,-2), (-2,2)\}$, we have $(K_1)_{\mathbb{Z}^{2}}^* = \mathrm{conv}\{(-2,-6), (2,2), (-2,2), (2,-6)\} \cap \mathbb{Z}^2$ by the discrete Legendre transform, while  $((K_1)_{\mathbb{Z}^{2}}^*)_{\mathbb{Z}^{2}}^* = K_1$ cannot be obtained by the Legendre transform. For $K_2 \in \mathscr{K}_{\mathbb{Z}}^2$ and $K_2^v = \{(4,3),$ $(1,-3), (-2,-1), (1,2)\}$, by the Legendre transform, we conclude that $(K_2)_{\mathbb{Z}^{2}}^* = \mathrm{conv}\{(3,-3), (1,-5), (6,15), (-2,7)\} \cap \mathbb{Z}^2$, while $((K_2)_{\mathbb{Z}^{2}}^*)_{\mathbb{Z}^{2}}^* = K_2$ cannot be obtained $K_2$ by the discrete Legendre transform.}
\end{remark}

\setcounter{table}{0}
\setcounter{figure}{0}

\section{The properties of convex lattice sets and discrete Mahler product}

Assume that
\begin{equation}\label{a27}
	\begin{aligned}
		h_{\beta_1 i}(x) & = b_{i 1}x_1 + \cdots + b_{i (n-1)}x_{n-1} + \beta_1, \ {\rm{for}} \ \beta_1 > 0, i = 1, \cdots, m_1, \\
		h_{\beta_2 l}(x) & = d_{l 1}x_1 + \cdots + d_{l (n-1)}x_{n-1} + \beta_2, \ {\rm{for}} \ \beta_2 < 0, l = 1, \cdots, m_2,
	\end{aligned}
\end{equation}
with $b_{i j}, d_{l j} ,\beta_1, \beta_2 \in \mathbb{Z}$ for $j = 1, \cdots, n - 1$.

In what follows, we turn our attention to a special subclass of  $\mathscr{K}_{\mathbb{Z}}^n$, consisting of all convex lattice sets in $\mathbb{Z}^n$, generated by the intersection of hyperplanes (\ref{a27}), that include the origin as its interior, that is, 
\begin{align*}
	\Big \{ \Big(\bigcap_{i = 1}^{m_1} h_{\beta_1 i}^- \bigcap \{x_n \geq 0\}  \Big) \bigcup \Big(\bigcap_{l = 1}^{m_2} h_{\beta_2 l}^- \bigcap \{x_n < 0\} \Big) \Big \} \bigcap \mathbb{Z}^n,
\end{align*}
which will be denoted by $\mathscr{C}_{\mathbb{Z}}^n$.

To prove the inclusion relation for the polar of convex lattice sets, we need the following lemma.

\begin{lemma}[\cite{book4}]\label{q1}
	If $K,L \subset \mathbb{Z}^n$ and $K \subseteq L$, then 
	\footnote{For Lemma \ref{q1}, the original form of this lemma \cite{book4} states that if $K,L \subset \mathbb{E}^n$ and $K \subseteq L$, then $\mathrm{conv}(K) \subseteq \mathrm{conv}(L)$.}
	$$\mathrm{conv}(K) \cap \mathbb{Z}^n \subseteq \mathrm{conv}(L) \cap \mathbb{Z}^n.$$ 
\end{lemma}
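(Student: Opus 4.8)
The plan is to reduce the statement to two order-preserving operations: the monotonicity of the convex hull, and the monotonicity of intersection with a fixed set. Since both hypotheses $K, L \subset \mathbb{Z}^n$ and $K \subseteq L$ hold in particular as inclusions of subsets of $\mathbb{E}^n$, the bulk of the argument can be carried out in the ambient space $\mathbb{E}^n$, and only at the very end do we restrict back to the lattice by intersecting with $\mathbb{Z}^n$.

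First I would invoke the classical fact quoted in the footnote and cited from \cite{book4}: for any sets $K, L \subseteq \mathbb{E}^n$ with $K \subseteq L$, one has $\mathrm{conv}(K) \subseteq \mathrm{conv}(L)$. This is immediate from the characterization of the convex hull as the smallest convex set containing a given set: the set $\mathrm{conv}(L)$ is convex and contains $L$, hence contains $K \subseteq L$, and therefore contains the smallest convex set containing $K$, namely $\mathrm{conv}(K)$. Applying this to the hypotheses of the lemma yields $\mathrm{conv}(K) \subseteq \mathrm{conv}(L)$.

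Next I would intersect both sides of this inclusion with $\mathbb{Z}^n$. Using the elementary set-theoretic fact that $A \subseteq B$ implies $A \cap C \subseteq B \cap C$ for any set $C$, and taking $A = \mathrm{conv}(K)$, $B = \mathrm{conv}(L)$, and $C = \mathbb{Z}^n$, we obtain
$$\mathrm{conv}(K) \cap \mathbb{Z}^n \subseteq \mathrm{conv}(L) \cap \mathbb{Z}^n,$$
which is precisely the desired conclusion.

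I expect no genuine obstacle here: the lemma is a purely formal consequence of the fact that each of the two operations $\mathrm{conv}(\cdot)$ and $(\cdot) \cap \mathbb{Z}^n$ is monotone with respect to inclusion, so their composition is monotone as well. The only point worth recording explicitly is that neither finiteness nor full-dimensionality of $K$ and $L$ is used, so that the statement applies unrestrictedly to the lattice configurations compared elsewhere in the paper.
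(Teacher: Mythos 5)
Your proof is correct and is exactly the argument the paper has in mind: the paper omits the proof as ``easy to get,'' pointing in its footnote to the classical monotonicity of $\mathrm{conv}(\cdot)$ from \cite{book4}, which you combine with the monotonicity of intersection with $\mathbb{Z}^n$ just as intended. Nothing is missing, and your remark that neither finiteness nor full-dimensionality is needed is accurate.
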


We omit the proof of Lemma \ref{q1}, since it is easy to get. 
Next, we introduce a lemma for union on sets.

\begin{lemma}[\cite{book1}]\label{b2}
	If $K,L \subset \mathbb{Z}^n$, then
	\footnote{A result similar to Lemma \ref{b2} also holds in {\rm{\cite{book1}}}. It states that if $ K,L \subset \mathbb{E}^n$, then $\mathrm{conv}( \mathrm{conv}(K) \cup \mathrm{conv}(L) ) = \mathrm{conv}( K \cup L ).$} 
	$$\mathrm{conv}\{(\mathrm{conv}(K) \cap \mathbb{Z}^n) \cup (\mathrm{conv}(L) \cap \mathbb{Z}^n )\}\cap \mathbb{Z}^n = \mathrm{conv}(K \cup L) \cap \mathbb{Z}^n.$$
\end{lemma}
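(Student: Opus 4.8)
The plan is to prove the two inclusions separately, reducing each to a result already in hand: the monotonicity of the lattice hull (Lemma \ref{q1}) for one containment, and the Euclidean identity $\mathrm{conv}(\mathrm{conv}(K)\cup\mathrm{conv}(L))=\mathrm{conv}(K\cup L)$ (the continuous analogue recorded in the footnote, from \cite{book1}) for the other. Throughout I would write $A=\mathrm{conv}(K)\cap\mathbb{Z}^n$ and $B=\mathrm{conv}(L)\cap\mathbb{Z}^n$, so that the left-hand side of the claimed identity is $\mathrm{conv}(A\cup B)\cap\mathbb{Z}^n$ and the right-hand side is $\mathrm{conv}(K\cup L)\cap\mathbb{Z}^n$.

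For the inclusion $\supseteq$, I would first observe that since $K\subset\mathbb{Z}^n$ and $K\subseteq\mathrm{conv}(K)$, we have $K\subseteq A$, and likewise $L\subseteq B$; hence $K\cup L\subseteq A\cup B$, with both sets lying in $\mathbb{Z}^n$. Applying Lemma \ref{q1} to this inclusion immediately yields
$$\mathrm{conv}(K\cup L)\cap\mathbb{Z}^n\subseteq\mathrm{conv}(A\cup B)\cap\mathbb{Z}^n.$$

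For the reverse inclusion $\subseteq$, I would use that $A\subseteq\mathrm{conv}(K)$ and $B\subseteq\mathrm{conv}(L)$, so $A\cup B\subseteq\mathrm{conv}(K)\cup\mathrm{conv}(L)$. Taking convex hulls and invoking the Euclidean identity gives
$$\mathrm{conv}(A\cup B)\subseteq\mathrm{conv}(\mathrm{conv}(K)\cup\mathrm{conv}(L))=\mathrm{conv}(K\cup L).$$
Intersecting both sides with $\mathbb{Z}^n$ then produces $\mathrm{conv}(A\cup B)\cap\mathbb{Z}^n\subseteq\mathrm{conv}(K\cup L)\cap\mathbb{Z}^n$. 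Combining the two inclusions closes the proof.

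As for the main obstacle, there is essentially none of substance here. The only points requiring a little care are the passages between the lattice world and the continuous world: that intersecting an inclusion of convex sets with $\mathbb{Z}^n$ preserves the inclusion (trivial), and that the classical convex-hull identity may legitimately be applied to the continuous sets $\mathrm{conv}(K)$ and $\mathrm{conv}(L)$ even though $K$ and $L$ are themselves lattice sets (immediate, since the identity holds for arbitrary subsets of $\mathbb{E}^n$). Thus the real content of the lemma is just the bookkeeping that lets the discrete statement inherit the classical identity, which is precisely why the two cited lemmas suffice.
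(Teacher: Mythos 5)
Your proof is correct, and its decomposition into two inclusions matches the paper's; the $\supseteq$ direction (from $K\cup L\subseteq A\cup B$, where $A=\mathrm{conv}(K)\cap\mathbb{Z}^n$ and $B=\mathrm{conv}(L)\cap\mathbb{Z}^n$, via Lemma \ref{q1}) is exactly the paper's argument. The $\subseteq$ direction takes a genuinely different route: you enlarge $A\cup B$ to $\mathrm{conv}(K)\cup\mathrm{conv}(L)$ and then invoke the classical Euclidean identity $\mathrm{conv}(\mathrm{conv}(K)\cup\mathrm{conv}(L))=\mathrm{conv}(K\cup L)$ recorded in the footnote, whereas the paper never uses that identity: it observes instead that $A$ and $B$ are each contained in $\mathrm{conv}(K\cup L)\cap\mathbb{Z}^n$, hence so is $A\cup B$, and then uses that $\mathrm{conv}\{\mathrm{conv}(K\cup L)\cap\mathbb{Z}^n\}\cap\mathbb{Z}^n=\mathrm{conv}(K\cup L)\cap\mathbb{Z}^n$, i.e., that the hull of a subset of the convex set $\mathrm{conv}(K\cup L)$ cannot escape it. The two routes are equally elementary, and both ultimately rest on hull monotonicity in $\mathbb{E}^n$; yours outsources the convexity bookkeeping to the standard cited identity, which is clean, while the paper's is self-contained (needing only monotonicity and the convexity of $\mathrm{conv}(K\cup L)$), so the discrete lemma does not lean on the continuous analogue it is meant to mirror. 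Nothing of substance hinges on the choice.
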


\begin{proof}
	Obviously, $\mathrm{conv}(K) \cap \mathbb{Z}^n \subset \mathrm{conv}(K \cup L) \cap \mathbb{Z}^n$ and $\mathrm{conv}(L) \cap \mathbb{Z}^n \subset \mathrm{conv}(K \cup L) \cap \mathbb{Z}^n$. 
	Then, this means that 
	$$(\mathrm{conv}(K) \cap \mathbb{Z}^n )\cup (\mathrm{conv}(L) \cap \mathbb{Z}^n) \subset \mathrm{conv}(K \cup L) \cap \mathbb{Z}^n.$$
	Consequently, we have
	\begin{align*}
		& \quad \ \mathrm{conv}\{(\mathrm{conv}(K) \cap \mathbb{Z}^n )\cup ( \mathrm{conv}(L) \cap \mathbb{Z}^n)\} \cap \mathbb{Z}^n \\
		& \subset \mathrm{conv}\{ \mathrm{conv}(K \cup L) \cap \mathbb{Z}^n \} \cap \mathbb{Z}^n \\
		& = \mathrm{conv}(K \cup L) \cap \mathbb{Z}^n.
	\end{align*}
	
	On the other hand, it remains to prove opposite inclusion
	$$\mathrm{conv}(K \cup L) \cap \mathbb{Z}^n \subset \mathrm{conv}\{(\mathrm{conv}(K) \cap \mathbb{Z}^n )\cup (\mathrm{conv}(L) \cap \mathbb{Z}^n)\} \cap \mathbb{Z}^n.$$
	
	It follows from $K \subset \mathrm{conv}(K) \cap \mathbb{Z}^n$ and $L \subset \mathrm{conv}(L) \cap \mathbb{Z}^n$ that 
	$$K \cup L \subset ({\rm{conv}}(K) \cap \mathbb{Z}^n )\cup ({\rm{conv}}(L) \cap \mathbb{Z}^n).$$
	Then, we get that
	\begin{align*}
		\mathrm{conv}(K \cup L) \cap \mathbb{Z}^n \subset \mathrm{conv}\{(\mathrm{conv}(K) \cap \mathbb{Z}^n )\cup (\mathrm{conv}(L) \cap \mathbb{Z}^n)\} \cap \mathbb{Z}^n.
	\end{align*}
	
	Therefore, we have
	\begin{align*}
		\mathrm{conv}\{(\mathrm{conv}(K) \cap \mathbb{Z}^n) \cup (\mathrm{conv}(L) \cap \mathbb{Z}^n )\}\cap \mathbb{Z}^n = \mathrm{conv}(K \cup L) \cap \mathbb{Z}^n.
	\end{align*}
\end{proof}

We now prove the inclusion relation for the polar of convex lattice sets.

\begin{theorem}\label{key4}
	If $K, L \in \mathscr{C}_{\mathbb{Z}}^n$, then 
	$K \subseteq L$ implies $L_{\mathbb{Z}^n}^* \subseteq  K_{\mathbb{Z}^n}^*.$
\end{theorem}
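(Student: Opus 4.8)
The plan is to reduce the statement to the classical fact that passing to a supporting halfspace reverses inclusions, and then to transport this through the explicit description of the polar furnished by the proof of Theorem~\ref{key3}. Recall from that proof (see~\eqref{tend2}) that for any member of $\mathscr{C}_{\mathbb Z}^n$ the polar is just the convex lattice hull of the Legendre points of its bounding hyperplanes; writing the facets of $K$ and $L$ in the common form~\eqref{a27} with the \emph{shared} constants $\beta_1>0$ and $\beta_2<0$, this gives
\[
K_{\mathbb Z^n}^\ast=\mathrm{conv}(K_0)\cap\mathbb Z^n,\qquad L_{\mathbb Z^n}^\ast=\mathrm{conv}(L_0)\cap\mathbb Z^n,
\]
where $K_0$ consists of the upper points $(b_{i1},\dots,b_{i(n-1)},-\beta_1)$ and the lower points $(d_{l1},\dots,d_{l(n-1)},-\beta_2)$ determined by $K$, and $L_0$ is the analogous set for $L$. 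Since every point of $K_0$ and $L_0$ is integral, Lemma~\ref{q1} shows the theorem follows once I prove the single inclusion $L_0\subseteq\mathrm{conv}(K_0)$: each point of $L_0$ then lies in $\mathrm{conv}(K_0)\cap\mathbb Z^n=K_{\mathbb Z^n}^\ast$, and a second application of Lemma~\ref{q1} yields $L_{\mathbb Z^n}^\ast=\mathrm{conv}(L_0)\cap\mathbb Z^n\subseteq K_{\mathbb Z^n}^\ast$.

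To prove $L_0\subseteq\mathrm{conv}(K_0)$ I treat the upper and lower Legendre points separately, using only that $K\subseteq L$ forces $\mathrm{conv}(K)\subseteq\mathrm{conv}(L)$. Fix an upper facet of $L$, given by $x_n=\langle b^{L},x'\rangle+\beta_1$ with $b^{L}\in\mathbb Z^{n-1}$. Being a facet halfspace of the polytope $\mathrm{conv}(L)$, the set $\{x_n\le\langle b^{L},x'\rangle+\beta_1\}$ contains $\mathrm{conv}(L)$ and hence $\mathrm{conv}(K)$. Over a neighbourhood of the origin in the $x'$-variables the top of $\mathrm{conv}(K)$ is precisely the lower envelope $\min_i(\langle b_i^{K},x'\rangle+\beta_1)$ of $K$'s own upper facets, and is positive, so this inclusion yields $\min_i\langle b_i^{K},x'\rangle\le\langle b^{L},x'\rangle$ for all such $x'$. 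Both sides being positively homogeneous of degree one, the inequality holds for every $x'\in\mathbb R^{n-1}$; were $b^{L}\notin\mathrm{conv}\{b_i^{K}\}$ a separating functional would contradict it, so $b^{L}\in\mathrm{conv}\{b_i^{K}\}$. Appending the common final coordinate $-\beta_1$, which is unchanged under convex combinations, places the corresponding upper point of $L_0$ in $\mathrm{conv}\{(b_i^{K},-\beta_1)\}\subseteq\mathrm{conv}(K_0)$. The lower facets are handled in the same way, with $\beta_2<0$ and the upper envelope $\max_l(\langle d_l^{K},x'\rangle+\beta_2)$, placing every lower point of $L_0$ in $\mathrm{conv}\{(d_l^{K},-\beta_2)\}\subseteq\mathrm{conv}(K_0)$. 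Together these give $L_0\subseteq\mathrm{conv}(K_0)$, completing the argument.

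The reduction through Lemma~\ref{q1} and the bookkeeping of the fixed heights $-\beta_1,-\beta_2$ are routine; the crux is the step of the previous paragraph that converts the inclusion of bodies into the algebraic membership $b^{L}\in\mathrm{conv}\{b_i^{K}\}$. Its delicate ingredients are the identification of the top of $\mathrm{conv}(K)$ near the apex $(0,\dots,0,\beta_1)$ with the lower envelope of $K$'s upper facets, which is what makes only the upper points of $K_0$ relevant, and the passage from a neighbourhood of the origin to all of $\mathbb R^{n-1}$ by homogeneity before separating. It is also worth recording at the outset, as part of $K,L\in\mathscr{C}_{\mathbb Z}^n$, that the constants $\beta_1,\beta_2$ in~\eqref{a27} are common to $K$ and $L$; this is exactly what pins all upper Legendre points to height $-\beta_1$ and all lower ones to $-\beta_2$, and the inclusion would not reverse without it.
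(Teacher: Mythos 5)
Your proof is correct within the paper's conventions, but it takes a genuinely different route through the heart of the argument. Both proofs begin the same way, reducing to the explicit descriptions (\ref{conv1})--(\ref{conv2}) of the polars as convex lattice hulls of the Legendre points sitting at the two heights $-\beta_1$ and $-\beta_2$. From there the paper proceeds combinatorially: it splits $\mathbb{E}^n$ into $2^n$ sign blocks, asserts from $K \subseteq L$ the per-coordinate dominations (\ref{eq12}) and (\ref{eq13}), converts these into comparisons of distances to the axis points $M$ and $N$ (Table \ref{Fig:figure10}), deduces the containments $U_{Q_{\beta_1}} \subseteq U_{P_{\beta_1}}$ and $U_{Q_{\beta_2}} \subseteq U_{P_{\beta_2}}$, and merges the two heights via Lemmas \ref{q1} and \ref{b2}. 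You replace all of this middle material with a single convex-analytic step: since every upper hyperplane of either body passes through the common apex $(0,\dots,0,\beta_1)$, the inclusion $\mathrm{conv}(K) \subseteq \mathrm{conv}(L)$ forces the envelope inequality $\min_i \langle b_i^K, x' \rangle \le \langle b_u^L, x' \rangle$ near $x' = 0$, hence everywhere by homogeneity, hence $b_u^L \in \mathrm{conv}\{ b_i^K \}$ by separation, and dually at the bottom. This buys three things: the argument is coordinate-free (no orthant bookkeeping), it needs only Lemma \ref{q1} and not Lemma \ref{b2} since both heights sit inside the one hull $\mathrm{conv}(K_0)$, and --- most substantively --- it proves the inclusion in the form that is actually needed. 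The paper's intermediate claim $U_{Q_{\beta_2}} \subseteq U_{P_{\beta_2}}$ is literally a containment of finite point sets, i.e.\ it says every Legendre point of $L$ coincides with one of $K$'s Legendre points; that is stronger than what holds in general, and stronger than what per-block distance comparisons can justify once $n-1 \ge 2$, whereas your conclusion $L_0 \subseteq \mathrm{conv}(K_0)$ is the correct statement and suffices. The price you pay is the step you yourself flag as delicate: identifying the top of $\mathrm{conv}(K)$ near the apex with the lower envelope of the listed hyperplanes presupposes that the $h_{\beta_1 i}$ are exactly the facet hyperplanes of $\mathrm{conv}(K)$, i.e.\ that the bounding region is a lattice polytope. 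But this is not an extra cost relative to the paper --- the same assumption already underlies (\ref{conv1}) and the proof of Theorem \ref{key3} --- and likewise your standing hypothesis that $K$ and $L$ share the intercepts $\beta_1, \beta_2$ is exactly how the paper sets up (\ref{eq4}) and (\ref{eq5}).
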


\begin{proof}
	By assumption, $K$ is bounded by the following hyperplanes:
	\begin{equation}\label{eq4}
		\begin{aligned}
			h_{\beta_1 i}(x) & = b_{i 1}x_1 + \cdots + b_{i (n-1)}x_{n-1} + \beta_1, \ {\rm{for}} \ \beta_1 > 0, i = 1, \cdots, m_1,\\
			h_{\beta_2 l}(x) & = d_{l 1}x_1 + \cdots + d_{l (n-1)}x_{n-1} + \beta_2, \ {\rm{for}} \ \beta_2 < 0, l = 1, \cdots, m_2,
		\end{aligned}
	\end{equation}
	with $b_{i j}, d_{l j}, \beta_1, \beta_2 \in \mathbb{Z}$ for $j = 1, \cdots, n - 1$.
	
	Similarly, $L$ is bounded by the following hyperplanes:
	\begin{equation}\label{eq5}
		\begin{aligned}
			g_{\beta_1 u}(x) & = c_{u 1}x_1 + \cdots + c_{u (n - 1)}x_{n - 1} + \beta_1, \ {\rm{for}} \ \beta_1 > 0, u = 1, \cdots, s_1,\\
			g_{\beta_2 v}(x) & = e_{v 1}x_1 + \cdots + e_{v (n - 1)}x_{n - 1} + \beta_2, \ {\rm{for}} \ \beta_2 < 0, v = 1, \cdots, s_2,
		\end{aligned}
	\end{equation}
	with $c_{u j}, e_{v j}, \beta_1, \beta_2 \in \mathbb{Z}$ for $j = 1, \cdots, n - 1$.
	
	It follows from (\ref{eq9}), Definition \ref{key1} and Definition \ref{key2} that
	\begin{align}\label{conv1}
		K_{\mathbb{Z}^n}^* 
		= \mathrm{conv}  \Big\{  \Big(\bigcup_{i = 1}^{m_1} (b_{i 1}, \cdots, b_{i (n - 1)}, -\beta_1 ) \Big) \bigcup 
		\Big(\bigcup_{l = 1}^{m_2} (d_{l 1}, \cdots, d_{l (n - 1)}, -\beta_2 ) \Big) \Big\} \bigcap \mathbb{Z}^n,
	\end{align}
	\begin{align}\label{conv2}
		L_{\mathbb{Z}^n}^* 
		= \mathrm{conv} \Big \{ \Big (\bigcup_{u = 1}^{s_1} (c_{u 1}, \cdots, c_{u (n - 1)}, -\beta_1 ) \Big) \bigcup 
		\Big(\bigcup_{v = 1}^{s_2} (e_{v 1}, \cdots, e_{v (n - 1)}, -\beta_2 ) \Big) \Big\} \bigcap \mathbb{Z}^n.
	\end{align}
	
	For convenience, let
	\begin{equation}\label{a25}
		\begin{aligned}
			K_1 & =  \{ (b_{1 1}, \cdots, b_{1 (n - 1)}, -\beta_1), \cdots, (b_{m_1 1}, \cdots, b_{m_1 (n - 1)}, -\beta_1), \\
			& \ \quad
			(d_{1 1}, \cdots, d_{1 (n - 1)}, -\beta_2), \cdots, (d_{m_2 1}, \cdots, d_{m_2 (n - 1)}, -\beta_2) \},
		\end{aligned}
	\end{equation}
	\begin{equation}\label{a26}
		\begin{aligned}
			L_1 & =  \{ (c_{1 1}, \cdots, c_{1 (n - 1)}, -\beta_1), \cdots, (c_{s_1 1}, \cdots, c_{s_1 (n - 1)}, -\beta_1), \\
			& \ \quad
			(e_{1 1}, \cdots, e_{1 (n - 1)}, -\beta_2), \cdots, (e_{s_2 1}, \cdots, e_{s_2 (n - 1)}, -\beta_2) \},
		\end{aligned}
	\end{equation}
	and then, we have
	\begin{align}\label{a24}
		( K_{\mathbb{Z}^n}^*)^v \subseteq K_1, \ (L_{\mathbb{Z}^n}^*)^v \subseteq L_1.
	\end{align}
	
	Obviously, for $K_1$ and $L_1$, we get that both $K_1$ and $L_1$ concentrate on $x_n = -\beta_1$ (or $x_n = -\beta_2$), and therefore, by (\ref{a25}), (\ref{a26}) and (\ref{a24}), we know that both $(K_{\mathbb{Z}^n}^*)^v$ and $(L_{\mathbb{Z}^n}^*)^v$ are also concentrated on $x_n = -\beta_1$ (or $x_n = -\beta_2$), where $x_n$ denotes the $n$-th coordinate.
	
	According to sign characteristics of coordinates, we divide $\mathbb{E}^n$ into the following $2^n$ blocks:
	\begin{gather*}
		I_1 = \{(x_1, \cdots, x_n) \in \mathbb{Z}^n : x_1 \geq 0, x_2 \geq 0, x_3 \geq 0, \cdots,x_{n - 1} \geq 0, x_n \geq 0\}, \\
		I_2 = \{(x_1, \cdots, x_n) \in \mathbb{Z}^n : x_1 \leq 0, x_2 \geq 0, x_3 \geq 0, \cdots,x_{n - 1} \geq 0, x_n \geq 0\}, \\
		I_3 = \{(x_1, \cdots, x_n) \in \mathbb{Z}^n : x_1 \leq 0, x_2 \geq 0, x_3 \leq 0, \cdots,x_{n - 1} \geq 0, x_n \geq 0\}, \\
		\vdots \\
		I_{2^{n - 2} + 1} = \{(x_1, \cdots, x_n) \in \mathbb{Z}^n : x_1 \leq 0, x_2 \leq 0, x_3 \leq 0, \cdots,x_{n - 1} \leq 0, x_n \geq 0\}, \\
		I_{2^{n - 2} + 2} = \{(x_1, \cdots, x_n) \in \mathbb{Z}^n : x_1 \geq 0, x_2 \leq 0, x_3 \geq 0, \cdots,x_{n - 1} \geq 0, x_n \geq 0\}, \\
		I_{2^{n - 2} + 3} = \{(x_1, \cdots, x_n) \in \mathbb{Z}^n : x_1 \geq 0, x_2 \geq 0, x_3 \leq 0, \cdots,x_{n - 1} \geq 0, x_n \geq 0\}, \\
		\vdots \\
		I_{2^{n - 1}} = \{(x_1, \cdots, x_n) \in \mathbb{Z}^n : x_1 \geq 0, x_2 \leq 0, x_3 \leq 0, \cdots,x_{n - 1} \leq 0, x_n \geq 0\},\\
		I_{2^{n - 1} + 1} = \{(x_1, \cdots, x_n) \in \mathbb{Z}^n : x_1 \geq 0, x_2 \geq 0, x_3 \geq 0, \cdots,x_{n - 1} \geq 0, x_n \leq 0\},\\
		I_{2^{n - 1} + 2} = \{(x_1, \cdots, x_n) \in \mathbb{Z}^n : x_1 \leq 0, x_2 \geq 0, x_3 \geq 0, \cdots,x_{n - 1} \geq 0, x_n \leq 0\},\\
		I_{2^{n - 1} + 3} = \{(x_1, \cdots, x_n) \in \mathbb{Z}^n : x_1 \leq 0, x_2 \geq 0, x_3 \leq 0, \cdots,x_{n - 1} \geq 0, x_n \leq 0\},\\
		\vdots \\
		I_{3 \cdot 2^{n - 2} + 1} = \{(x_1, \cdots, x_n) \in \mathbb{Z}^n : x_1 \leq 0, x_2 \leq 0, x_3 \leq 0, \cdots,x_{n - 1} \leq 0, x_n \leq 0\}, \\
		I_{3 \cdot 2^{n - 2} + 2} = \{(x_1, \cdots, x_n) \in \mathbb{Z}^n : x_1 \geq 0, x_2 \leq 0, x_3 \geq 0, \cdots,x_{n - 1} \geq 0, x_n \leq 0\}, \\
		I_{3 \cdot 2^{n - 2} + 3} = \{(x_1, \cdots, x_n) \in \mathbb{Z}^n : x_1 \geq 0, x_2 \geq 0, x_3 \leq 0, \cdots,x_{n - 1} \geq 0, x_n \leq 0\}, \\
		\vdots \\
		I_{2^n}  = \{(x_1, \cdots, x_n) \in \mathbb{Z}^n : x_1 \geq 0, x_2 \leq 0, x_3 \leq 0, \cdots, x_{n - 1} \leq 0, x_n \leq 0\}. 
	\end{gather*}
	
	Together with $K \subseteq L$ and the fact that $K, L \in \mathscr{C}_{\mathbb{Z}}^n$, we have 
	\begin{align}\label{eq12}		
		|b_{i j}| \geq |c_{u j}|
	\end{align}
	in $I_{t}$ for $1 \leq t \leq 2^{n - 1}$ and 
	\begin{align}\label{eq13}
		|d_{l j}| \geq |e_{v j}|
	\end{align}
	in $I_{2^{n - 1} + t}$ for $1 \leq t \leq 2^{n - 1}$.
	
	Let $M$ be the intersection of $x_n = -\beta_1$ and $x_n$-axis and $N$ be the intersection of $x_n = -\beta_2$ and $x_n$-axis, then the coordinates of $M$ and $N$ are $(0, \cdots, 0, -\beta_1)$ and $(0, \cdots, 0, -\beta_2)$, respectively.
	
	Let $P \in K_1$ and $Q \in L_1$.
	Without loss of generality, we can assume that $P$ and $Q$ lie in $I_1$. Then, this means that $P$ lies in $x_n = -\beta_2$ and $Q$ lies in $x_n = -\beta_2$, and thus, we obtain
	\begin{align*}
		P (d_{l 1}, \cdots, d_{l (n - 1)}, -\beta_2), \ 
		Q (e_{v 1}, \cdots, e_{v (n - 1)}, -\beta_2).
	\end{align*}
	
	Next, for segments $NP$ and $NQ$, we have
	\begin{align*}
		|NP| = \sqrt{ (d_{l 1})^2 +  \cdots + (d_{l (n - 1)})^2 + (-\beta_2 + \beta_2 )^2}, \\
		|NQ| = \sqrt{ (e_{v 1})^2 + \cdots + (e_{v (n - 1)})^2 + (-\beta_2 + \beta_2)^2},
	\end{align*}
	where $|\cdot|$ denotes the length of segment.
	
	It follows from (\ref{eq13}) that $|NP| \geq |NQ|$, and therefore, for $P \in K_1$ and $Q \in L_1$, $|NP| \geq |NQ|$ holds in $I_1, \cdots, I_{2^{n - 1}}$. 
	Similarly, in $I_{2^{n - 1} + 1}$, we get that $P$ lies in $x_n = -\beta_1$ and $Q$ lies in $x_n = -\beta_1$, and then, by (\ref{eq12}), it is easy to verify that $|MP| \geq |MQ|$. 
	Hence, in $I_{2^{n - 1} + 1}, \cdots, I_{2^n}$, we can get that, for $P \in K_1$ and $Q \in L_1$, $|MP| \geq |MQ|$.
	
	See Fig. \ref{Fig:figure13}, which shows the situation of $K \subset L$ for $n = 3$. 
	\begin{figure}[h]
		\centering
		\subfigure[$K \subset L $ ]{
			\label{Fig:figure9}
			\includegraphics[width=0.26\textwidth]{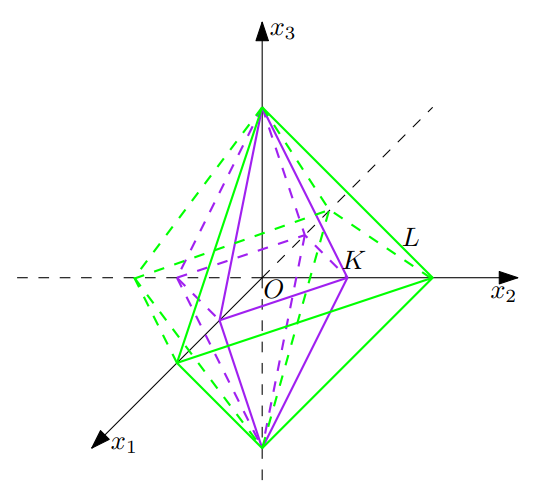}}
		\subfigure[$L_{\mathbb{Z}^3}^* \subset K_{\mathbb{Z}^3}^*$]{
			\label{Fig:figure11}
			\includegraphics[width=0.26\textwidth]{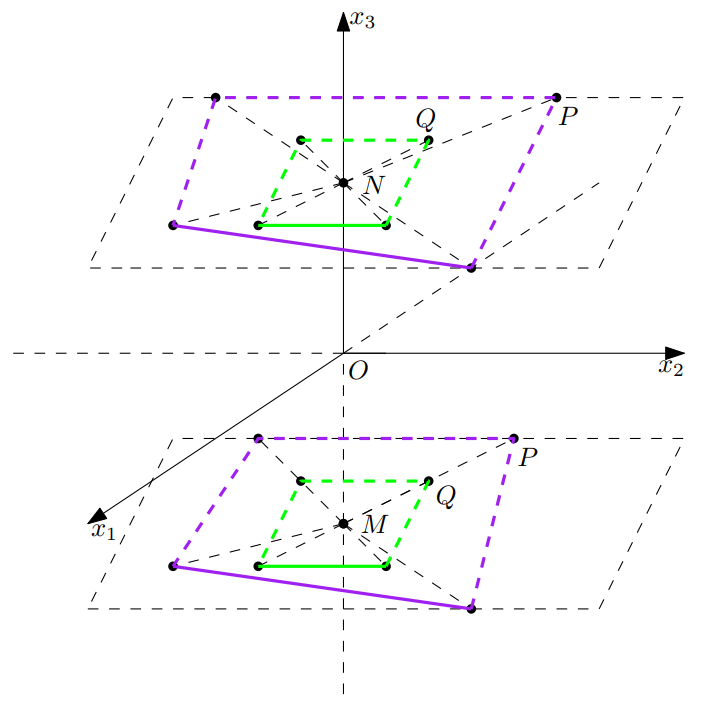}}
		\caption{An illustration of the inclusion relation}
		\label{Fig:figure13}
	\end{figure}
	
	Summarizing the above arguments, we obtain Table \ref{Fig:figure10}.
	\begin{table*}[h]
		\centering        
		\caption{\label{Fig:figure10} Results of the segment comparison in $x_n = -\beta_1$ and $x_n = -\beta_2$}
		\begin{tabular}{cccc }
			\hline\hline\noalign{\smallskip}	
			Blocks               &Characteristics of coordinates   & $x_n = -\beta_1$  & $x_n = -\beta_2$ \\
			\noalign{\smallskip}\hline\noalign{\smallskip}
			$I_1$	            &$(+, +, +,\cdots, +, +)$     &                   & $|NP| \geq |NQ|$ \\
			$I_2$	            &$(-, +, +, \cdots, +, +)$    &                   & $|NP| \geq |NQ|$ \\
			$I_3$	            &$(-, +, -, \cdots, +, +)$    &                   & $|NP| \geq |NQ|$ \\
			$\vdots$            &$\vdots$                     &                   & $\vdots$         \\
			$I_{2^{n - 2} + 1}$ &$(-, -, -, \cdots, -, +)$    &                   & $|NP| \geq |NQ|$ \\
			$I_{2^{n - 2} + 2}$ &$(+, -, +, \cdots, +, +)$    &                   & $|NP| \geq |NQ|$  \\
			$I_{2^{n - 2} + 3}$ &$(+, +, -, \cdots, +, +)$    &                   & $|NP| \geq |NQ|$  \\
			$\vdots$            &$\vdots$                     &                   & $\vdots$          \\
			$I_{2^{n - 1}}$     &$(+, -, -, \cdots, -, +)$    &                   & $|NP| \geq |NQ|$ \\
			$I_{2^{n - 1} + 1}$ &$(+, +, +, \cdots, +, -)$    & $|MP| \geq |MQ|$  &                  \\
			$I_{2^{n - 1} + 2}$ &$(-, +, +, \cdots, +, -)$    & $|MP| \geq |MQ|$  &                  \\
			$I_{2^{n - 1} + 3}$ &$(-, +, -, \cdots, +, -)$    & $|MP| \geq |MQ|$  &                  \\
			$\vdots$            &$\vdots$                     & $\vdots$          &                  \\
			$I_{3 \cdot 2^{n - 2} + 1}$ &$(-, -, -, \cdots, -, -)$    & $|MP| \geq |MQ|$  &  \\
			$I_{3 \cdot 2^{n - 2} + 2}$ &$(+, -, +, \cdots, +, -)$    & $|MP| \geq |MQ|$  &   \\
			$I_{3 \cdot 2^{n - 2} + 3}$ &$(+, +, -, \cdots, +, -)$    & $|MP| \geq |MQ|$  &   \\
			$\vdots$                    &$\vdots$                     & $\vdots$          &          \\
			$I_{2^n}$                   &$(+, -, -, \cdots, -, -)$    & $|MP| \geq |MQ|$  &       \\
			\noalign{\smallskip}\hline
		\end{tabular}
	\end{table*}  
	
	Assume that $P_{\beta_1 i}, P_{\beta_2 l} \in K_1$ and $Q_{\beta_1 u}, Q_{\beta_2 v} \in L_1$, and then, we let
	\begin{equation}\label{pdf4}
		\begin{gathered}
			P_{\beta_1 i}  = (b_{i 1}, \cdots, b_{i (n - 1)}, -\beta_1 ),\
			P_{\beta_2 l} = (d_{l 1}, \cdots, d_{l (n - 1)}, -\beta_2 ), \\
			Q_{\beta_1 u}  = (c_{u 1}, \cdots, c_{u (n - 1)}, -\beta_1 ), \
			Q_{\beta_2 v} = (e_{v 1}, \cdots, e_{v (n - 1)}, -\beta_2 ),
		\end{gathered}
	\end{equation}
	for $1 \leq i \leq m_1$, $1 \leq l \leq m_2$, $1 \leq u \leq s_1$, $1 \leq v \leq s_2$.
	
	Indeed, if $K \subseteq L$, then $L_{\mathbb{Z}^n}^* \subseteq K_{\mathbb{Z}^n}^*$, which is equivalent to
	\begin{align*}		
		\mathrm{conv}(\bigcup_{u = 1}^{s_1} Q_{\beta_1 u})  \bigcap \mathbb{Z}^n \subseteq \mathrm{conv}(\bigcup_{i = 1}^{m_1} P_{\beta_1 i})  \bigcap \mathbb{Z}^n
	\end{align*}  
	in $x_n = -\beta_1$ 
	and 
	\begin{align*}	
		\mathrm{conv}(\bigcup_{v = 1}^{s_2} Q_{\beta_2 v})  \bigcap \mathbb{Z}^n \subseteq \mathrm{conv}(\bigcup_{l = 1}^{m_2} P_{\beta_2 l})  \bigcap \mathbb{Z}^n 
	\end{align*} 
	in $x_n = -\beta_2$.
	
	For convenience, we let 
	\begin{align*}		
		U_{P_{\beta_1}} = \bigcup_{i = 1}^{m_1}P_{\beta_1 i},\
		U_{P_{\beta_2}} = \bigcup_{l = 1}^{m_2}P_{\beta_2 l},\
		U_{Q_{\beta_1}} = \bigcup_{u = 1}^{s_1}Q_{\beta_1 u},\
		U_{Q_{\beta_2}} = \bigcup_{v = 1}^{s_2}Q_{\beta_2 v}.
	\end{align*} 
	
	Obviously, we have $|NP| \geq |NQ|$ in $\{x_n = -\beta_2\} \cap I_{t}$ for $1 \leq t \leq 2^{n - 1}$, see Table \ref{Fig:figure10}. Then, we obtain
	\begin{align*}
		U_{Q_{\beta_2}} \subseteq U_{P_{\beta_2}}.
	\end{align*}
	It follows from Lemma \ref{q1} that
	\begin{align}\label{c1}
		\mathrm{conv}(U_{Q_{\beta_2}}) \cap \mathbb{Z}^n \subseteq \mathrm{conv}(U_{P_{\beta_2}}) \cap \mathbb{Z}^n
	\end{align}
	in $x_n = -\beta_2$. Analogously, we have $|MP| \geq |MQ|$ in $\{x_n = -\beta_1\} \cap I_{2^{n - 1} + t}$ for $1 \leq t \leq 2^{n - 1}$, see Table \ref{Fig:figure10}. Thus, we see that
	\begin{align*}
		U_{Q_{\beta_1}} \subseteq U_{P_{\beta_1}},
	\end{align*}
	and then, by Lemma \ref{q1}, in $x_n = -\beta_1$, we get that
	\begin{align}\label{c2}
		\mathrm{conv}(U_{Q_{\beta_1}}) \cap \mathbb{Z}^n \subseteq \mathrm{conv}(U_{P_{\beta_1}}) \cap \mathbb{Z}^n.
	\end{align} 
	
	Together with (\ref{c1}) and (\ref{c2}), we have
	\begin{align*}
		(\mathrm{conv}(U_{Q_{\beta_2}}) \cap \mathbb{Z}^n ) \cup ( \mathrm{conv}(U_{Q_{\beta_1}}) \cap \mathbb{Z}^n ) 
		\subseteq  
		(\mathrm{conv}(U_{P_{\beta_2}}) \cap \mathbb{Z}^n ) \cup (\mathrm{conv}(U_{P_{\beta_1}}) \cap \mathbb{Z}^n),
	\end{align*}
	and then, we get that
	\begin{align*}
		& \quad \ \mathrm{conv} \{ (\mathrm{conv}(U_{Q_{\beta_2}}) \cap \mathbb{Z}^n ) \cup ( \mathrm{conv}(U_{Q_{\beta_1}}) \cap \mathbb{Z}^n ) \}\cap \mathbb{Z}^n \\
		& \subseteq  
		\mathrm{conv} \{ (\mathrm{conv}(U_{P_{\beta_2}}) \cap \mathbb{Z}^n ) \cup (\mathrm{conv}(U_{P_{\beta_1}}) \cap \mathbb{Z}^n) \} \cap \mathbb{Z}^n.
	\end{align*}
	
	It follows from Lemma \ref{b2} that
	\begin{align*}
		\mathrm{conv}\{ (U_{Q_{\beta_1}} \cup U_{Q_{\beta_2}})\} \cap \mathbb{Z}^n \subseteq \mathrm{conv}\{(U_{P_{\beta_1}} \cup U_{P_{\beta_2}})\} \cap \mathbb{Z}^n.
	\end{align*}
	
	This yields that
	$$L_{\mathbb{Z}^n}^* \subseteq K_{\mathbb{Z}^n}^*.$$
\end{proof}

We illustrate Theorem \ref{key4} with an example in $\mathbb{Z}^2$.
\begin{example}
	Let $K, L \in \mathscr{C}_{\mathbb{Z}}^2$.
	The vertices of $K$ and $L$ are 
	$K^v = \{(0,3), (1,0), (0,-3),  (-1,0)\}$ and 
	$L^v = \{(0,3), (2,1), $ $(0,-3), (-2,1)\}$, respectively.
	Then, we can get that 
	$(K_{\mathbb{Z}^2}^*)^v = \{(-3,-3), (3,3), (-3,3), (3,-3)\}$ 
	and 
	$ (L_{\mathbb{Z}^2}^*)^v = \{(-1,-3),$ $ (2,3), (-2,3), (1,-3)\}$.
	Therefore, we have $L_{\mathbb{Z}^2}^* \subset K_{\mathbb{Z}^2}^*$.
	See Fig. \ref{Fig:figure3}.
	
	\begin{figure}[H]                       
		\includegraphics[width=0.50\textwidth]{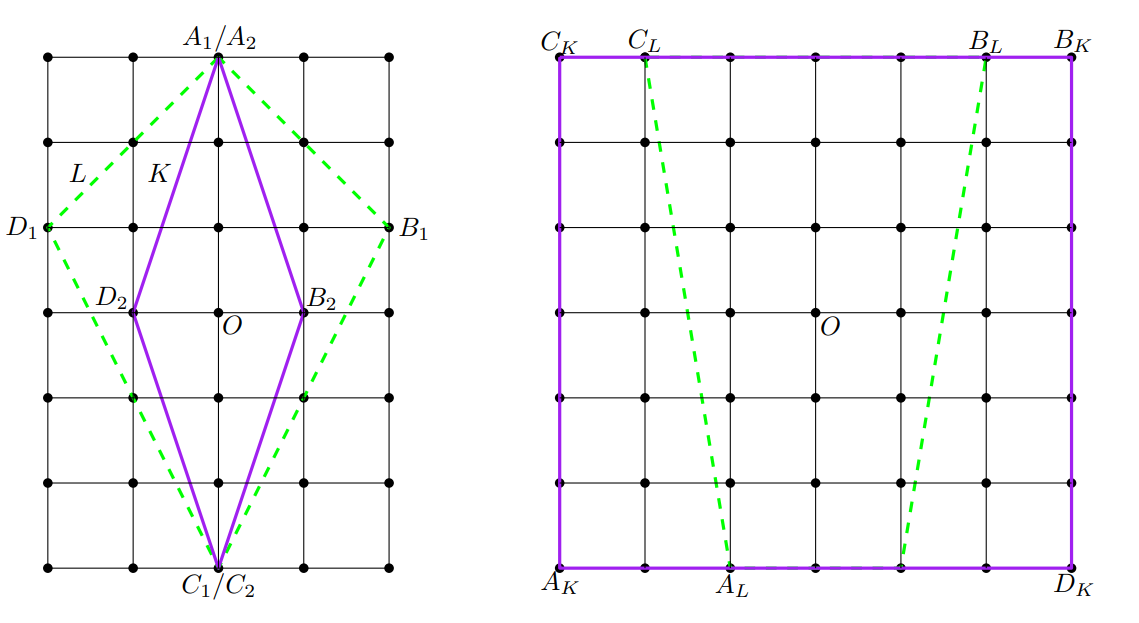}
		\caption{\label{Fig:figure3}$K$ (purple) and $L$ (green), \ $K_{\mathbb{Z}^2}^*$ and $ L_{\mathbb{Z}^2}^*$ (from left)}
	\end{figure}
\end{example}

We now come to intersection and union on the polar of convex lattice sets. 

\begin{theorem}\label{pdf1}
	If $ K,L \in \mathscr{C}_{\mathbb{Z}}^n$, then 
	$(K \cap L)_{\mathbb{Z}^n}^* = \mathrm{conv}(K_{\mathbb{Z}^n}^* \cup L_{\mathbb{Z}^n}^*) \cap \mathbb{Z}^n.$
\end{theorem}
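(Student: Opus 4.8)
The plan is to reduce the asserted identity to Lemma~\ref{b2} by realizing both sides as the lattice convex hull of explicit finite point sets. Recall the coefficient-vector sets $K_1$ and $L_1$ from (\ref{a25}) and (\ref{a26}); comparing (\ref{conv1}) with (\ref{a25}), and (\ref{conv2}) with (\ref{a26}), we have $K_{\mathbb{Z}^n}^* = \mathrm{conv}(K_1)\cap\mathbb{Z}^n$ and $L_{\mathbb{Z}^n}^* = \mathrm{conv}(L_1)\cap\mathbb{Z}^n$. The crux of the argument is to establish
\begin{align}\label{pf1star}
	(K\cap L)_{\mathbb{Z}^n}^* = \mathrm{conv}(K_1\cup L_1)\cap\mathbb{Z}^n,
\end{align}
after which Lemma~\ref{b2}, applied with $K_1$ and $L_1$ in the roles of $K$ and $L$, rewrites the right-hand side as $\mathrm{conv}\{(\mathrm{conv}(K_1)\cap\mathbb{Z}^n)\cup(\mathrm{conv}(L_1)\cap\mathbb{Z}^n)\}\cap\mathbb{Z}^n = \mathrm{conv}(K_{\mathbb{Z}^n}^*\cup L_{\mathbb{Z}^n}^*)\cap\mathbb{Z}^n$, which is precisely the claim.

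To establish (\ref{pf1star}) I would first describe $K\cap L$ concretely. Because $K$ and $L$ belong to $\mathscr{C}_{\mathbb{Z}}^n$, they are cut out by hyperplanes of the form (\ref{a27}) sharing the common levels $\beta_1>0$ and $\beta_2<0$; hence $K\cap L$ is bounded on $\{x_n\geq 0\}$ by the union of the upper hyperplanes of $K$ and of $L$, and on $\{x_n<0\}$ by the union of their lower hyperplanes. In particular $K\cap L$ again lies in $\mathscr{C}_{\mathbb{Z}}^n$ and retains the origin as an interior point, so its polar is defined. Feeding this combined hyperplane family into the polar formula (\ref{conv1}), now read for $K\cap L$, the defining points of $(K\cap L)_{\mathbb{Z}^n}^*$ are exactly the coefficient vectors collected in (\ref{a25}) and (\ref{a26}), namely the points of $K_1\cup L_1$, which yields (\ref{pf1star}).

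The step I expect to require the most care is the passage to (\ref{pf1star}) when some members of the combined hyperplane family are redundant for $K\cap L$. Such a hyperplane is not bounding, so the polar construction discards its coefficient vector; one must verify that discarding it does not shrink the convex hull, i.e. that the coefficient vector of a redundant constraint always lies in $\mathrm{conv}$ of the coefficient vectors of the surviving, facet-defining constraints. This is the standard polarity fact that a redundant half-space contributes only a non-vertex point to the polar polytope, applied to the continuous hulls $\mathrm{conv}(K_1\cup L_1)$.

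To avoid analysing redundancy directly, I would instead prove the two inclusions separately using only results already in hand. For $\mathrm{conv}(K_{\mathbb{Z}^n}^*\cup L_{\mathbb{Z}^n}^*)\cap\mathbb{Z}^n\subseteq(K\cap L)_{\mathbb{Z}^n}^*$, observe that $K\cap L\subseteq K$ and $K\cap L\subseteq L$, so Theorem~\ref{key4} gives $K_{\mathbb{Z}^n}^*\subseteq(K\cap L)_{\mathbb{Z}^n}^*$ and $L_{\mathbb{Z}^n}^*\subseteq(K\cap L)_{\mathbb{Z}^n}^*$; since $(K\cap L)_{\mathbb{Z}^n}^*$ is itself a convex lattice set, Lemma~\ref{q1} upgrades $K_{\mathbb{Z}^n}^*\cup L_{\mathbb{Z}^n}^*\subseteq(K\cap L)_{\mathbb{Z}^n}^*$ to the desired containment. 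For the reverse inclusion, every vertex of $(K\cap L)_{\mathbb{Z}^n}^*$ is the coefficient vector of a bounding hyperplane of $K\cap L$, hence an integer point of $K_1\cup L_1$; since the points of $K_1$ (resp. $L_1$) are integer points of $\mathrm{conv}(K_1)$ (resp. $\mathrm{conv}(L_1)$), we have $K_1\subseteq K_{\mathbb{Z}^n}^*$ and $L_1\subseteq L_{\mathbb{Z}^n}^*$, so a further application of Lemma~\ref{q1} gives $(K\cap L)_{\mathbb{Z}^n}^*\subseteq\mathrm{conv}(K_{\mathbb{Z}^n}^*\cup L_{\mathbb{Z}^n}^*)\cap\mathbb{Z}^n$. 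This two-sided route relies only on Theorem~\ref{key4} and Lemma~\ref{q1} and never needs to decide which hyperplanes survive in $K\cap L$.
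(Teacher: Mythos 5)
Your main plan coincides with the paper's own proof: the paper writes $K\cap L$ via set algebra on the two halfspace families (checking that cross terms such as $(J_{h_{\beta_2}}\cap\{x_n<0\})\cap(J_{g_{\beta_1}}\cap\{x_n\geq 0\})$ are empty), reads off from Definitions \ref{key1} and \ref{key2} that $(K\cap L)_{\mathbb{Z}^n}^*$ is the lattice hull of all coefficient vectors $U_{P_{\beta_1}}\cup U_{Q_{\beta_1}}\cup U_{P_{\beta_2}}\cup U_{Q_{\beta_2}}$ (your $K_1\cup L_1$), and then finishes with Lemma \ref{b2} exactly as you do. Where you genuinely depart from the paper is the redundancy discussion and the two-sided fallback. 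The paper never raises the possibility that a member of the combined family fails to be facet-defining for $K\cap L$; it simply treats the combined family as \emph{the} representation of $K\cap L$ and applies the definition of the polar to it, so under the paper's conventions your key identity is essentially definitional. Your alternative route via Theorem \ref{key4} and Lemma \ref{q1} is sound and does not appear in the paper; it requires $K\cap L\in\mathscr{C}_{\mathbb{Z}}^n$ (true, since both families share the levels $\beta_1,\beta_2$ and the origin remains interior), and it buys a proof of the first inclusion that is immune to which hyperplanes survive. Note, however, that its reverse inclusion still tacitly fixes the combined family as the representation when you assert that every vertex of $(K\cap L)_{\mathbb{Z}^n}^*$ lies in $K_1\cup L_1$: with a minimal facet representation of $\mathrm{conv}(K\cap L)$, bounding hyperplanes belonging to neither family can occur, so the representation-dependence of Definition \ref{key2} is not fully eliminated by your fallback. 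That ambiguity is inherent in the paper's definition, and the paper's own proof is exposed to it at least as much as yours is.
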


\begin{proof}
	For convenience, we let
	\begin{gather*}
		J_{h_{\beta_1}} = \bigcap_{i = 1}^{m_1} h_{\beta_1 i}^-,\
		\ J_{h_{\beta_2}} = \bigcap_{l = 1}^{m_2} h_{\beta_2 l}^-,\ 
		\ J_{g_{\beta_1}} = \bigcap_{u = 1}^{s_1} g_{\beta_1 u}^-,\
		\ \ J_{g_{\beta_2}} = \bigcap_{v = 1}^{s_2} g_{\beta_2 v}^-, \\		
		U_{P_{\beta_1}} = \bigcup_{i = 1}^{m_1}P_{\beta_1 i},\
		U_{P_{\beta_2}} = \bigcup_{l = 1}^{m_2}P_{\beta_2 l},\
		U_{Q_{\beta_1}} = \bigcup_{u = 1}^{s_1}Q_{\beta_1 u},\
		U_{Q_{\beta_2}} = \bigcup_{v = 1}^{s_2}Q_{\beta_2 v}.
	\end{gather*} 
	
	Clearly, $K$ and $L$ are bounded by the hyperplanes of (\ref{eq4}) and (\ref{eq5}), respectively. Then, we have 
	\begin{align}\label{K}
		K = \{ ( J_{h_{\beta_1}} \cap \{x_n \geq 0\}  ) \cup ( J_{h_{\beta_2}} \cap \{x_n < 0\} )  \} \cap \mathbb{Z}^n,
	\end{align}
	\begin{align}\label{L}
		L = \{ ( J_{g_{\beta_1}} \cap \{x_n \geq 0\}  ) \cup ( J_{g_{\beta_2}} \cap \{x_n < 0\} )  \} \cap \mathbb{Z}^n.
	\end{align}
	
	By the definitions of $K$ and $L$, we have $K \cap L \neq \varnothing$.
	Together with (\ref{K}) and (\ref{L}), we get that
	\begin{align*}
		K \cap L 
		& = \{ \{ ( J_{h_{\beta_1}} \cap \{x_n \geq 0\} ) \cup ( J_{h_{\beta_2}} \cap  \{x_n < 0\} )   \} \cap \mathbb{Z}^n  \} 
		\cap \\
		& \quad \  
		\{  \{ ( J_{g_{\beta_1}} \cap  \{x_n \geq 0\}  ) \cup ( J_{g_{\beta_2}} \cap   \{x_n < 0\} ) \}  \cap \mathbb{Z}^n  \}, 
	\end{align*}
	from which it follows that
	\begin{align*}
		K \cap L 
		& = \{ \{ ( J_{h_{\beta_1}} \cap \{x_n \geq 0\}   ) \cup ( J_{h_{\beta_2}} \cap  \{x_n < 0\} )  \} 
		\cap 
		\{ ( J_{g_{\beta_1}} \cap \{x_n \geq 0\} ) \cup ( J_{g_{\beta_2}} \cap \{x_n < 0\} )  \}  \} \cap \mathbb{Z}^n \\
		& =\{ \{ ( J_{h_{\beta_1}} \cap  \{x_n \geq 0\} ) \cap ( J_{g_{\beta_1}} \cap \{x_n \geq 0\} ) \}  \cup 
		\{( J_{h_{\beta_2}} \cap 
		\{x_n < 0\}  ) \cap 
		( J_{g_{\beta_1}} \cap \{x_n \geq 0\} )  \} 
		\cup \\
		& \quad \
		\{( J_{h_{\beta_1}} \cap \{x_n \geq 0\}) \cap ( J_{g_{\beta_2}} \cap \{x_n < 0\}  ) \}  
		\cup 
		\{ ( J_{h_{\beta_2}} \cap \{x_n < 0\}   ) \cap ( J_{g_{\beta_2}} \cap \{x_n < 0\}  )\} \} \cap \mathbb{Z}^n.
	\end{align*}
	
	Obviously, we have
	\begin{align*}		
		\{ ( J_{h_{\beta_2}} \cap \{x_n < 0\}  ) 
		\cap  ( J_{g_{\beta_1}} \cap \{x_n \geq 0\} )  \} = \varnothing, \\
		\{ ( J_{h_{\beta_1}} \cap \{x_n \geq 0\} ) \cap ( J_{g_{\beta_2}} \cap \{x_n < 0\} ) \}
		= \varnothing,
	\end{align*}
	which means that  
	\begin{align*}		
		K \cap L 
		& =  \{ \{ ( J_{h_{\beta_1}} \cap \{x_n \geq 0\} ) \cap ( J_{g_{\beta_1}} \cap \{x_n \geq 0\}  ) \} 
		\cup \\
		& \quad \
		\{ ( J_{h_{\beta_2}} \cap \{x_n < 0\}  ) \cap ( J_{g_{\beta_2}} \cap \{x_n < 0\}  ) \}  \} \cap \mathbb{Z}^n.
	\end{align*}
	
	For $x_n = 0$, the discrete Legendre transform $x_n^*(p) : \mathbb{Z}^{n - 1} \rightarrow \overline{\mathbb{Z}}$ is given by
	$$ 
	x_n^*(p) =\left\{
	\begin{aligned}
		0,        \ \quad & p=(0, \cdots, 0), \\
		+ \infty, \ \quad & {\rm{otherwise.}}
	\end{aligned}
	\right.
	$$
	Obviously, the origin $O \in (K \cap L)_{\mathbb{Z}^n}^* $. 
	From (\ref{pdf4}), Definition \ref{key1} and Definition \ref{key2}, we get that 
	\begin{align*}
		(K \cap L)_{\mathbb{Z}^n}^* 
		=  \mathrm{conv}\{( U_{P_{\beta_1}} \cup U_{Q_{\beta_1}}) \cup  
		(U_{P_{\beta_2}} \cup U_{Q_{\beta_2}})\} \cap \mathbb{Z}^n.
	\end{align*}
	
	Now we show that $$\mathrm{conv} (K_{\mathbb{Z}^n}^* \cup L_{\mathbb{Z}^n}^* ) \cap \mathbb{Z}^n = \mathrm{conv}\{ (U_{P_{\beta_1}} \cup U_{Q_{\beta_1}} ) \cup (U_{P_{\beta_2}} \cup U_{Q_{\beta_2}})\} \cap \mathbb{Z}^n.$$
	
	By (\ref{pdf4}), (\ref{conv1}) and (\ref{conv2}), we can see that  
	\begin{align}\label{pdf2}
		K_{\mathbb{Z}^n}^* = \mathrm{conv} (U_{P_{\beta_1}} \cup U_{P_{\beta_2}}) \cap \mathbb{Z}^n,
	\end{align}
	\begin{align}\label{pdf3}
		L_{\mathbb{Z}^n}^* = \mathrm{conv} (U_{Q_{\beta_1}} \cup U_{Q_{\beta_2}}) \cap \mathbb{Z}^n.
	\end{align} 
	Consequently, we have 
	\begin{align*}		
		\mathrm{conv} (K_{\mathbb{Z}^n}^* \cup L_{\mathbb{Z}^n}^* ) \cap \mathbb{Z}^n
		= \mathrm{conv}\{ ( \mathrm{conv}(U_{P_{\beta_1}} \cup U_{P_{\beta_2}}) \cap \mathbb{Z}^n ) \cup  ( \mathrm{conv}(U_{Q_{\beta_1}} \cup U_{Q_{\beta_2}}) \cap \mathbb{Z}^n  )  \} \cap \mathbb{Z}^n.
	\end{align*}
	
	It follows from Lemma \ref{b2} that 
	\begin{align*}
		\mathrm{conv} (K_{\mathbb{Z}^n}^* \cup L_{\mathbb{Z}^n}^* ) \cap \mathbb{Z}^n
		& = \mathrm{conv} \{(U_{P_{\beta_1}} \cup U_{P_{\beta_2}}) \cup (U_{Q_{\beta_1}} \cup U_{Q_{\beta_2}}) \} \cap \mathbb{Z}^n \\
		& = \mathrm{conv}\{ (U_{P_{\beta_1}} \cup U_{Q_{\beta_1}} ) \cup (U_{P_{\beta_2}} \cup U_{Q_{\beta_2}})\} \cap \mathbb{Z}^n.
	\end{align*}
	
	Thus, we have 
	$$(K \cap L)_{\mathbb{Z}^n}^* = \mathrm{conv} (K_{\mathbb{Z}^n}^* \cup L_{\mathbb{Z}^n}^* ) \cap \mathbb{Z}^n .$$
\end{proof}

We illustrate Theorem \ref{pdf1} with an example in $\mathbb{Z}^2$.

\begin{example}
	Let $K, L \in \mathscr{C}_{\mathbb{Z}}^2$.
	The vertices of $K$ is 
	$K^v = \{(0,3), (-1,0), (0,-3), $ $(2,-1)\}$
	and the vertices of $L$ is 
	$L^v = \{(0,3), (-3,0)$, $(0,-3), (1,0)\}.$
	Then, we get that 
	$(K_{\mathbb{Z}^2}^*)^v = \{(3,-3), (-3,3), (1,3), (-2,-3)\}$
	and
	$(L_{\mathbb{Z}^2}^*)^v = \{(1,-3), (-1,3), $ $(3,3), (-3,-3)\}$.
	Thus,
	$(K \cap L)_{\mathbb{Z}^2}^* = \mathrm{conv}(K_{\mathbb{Z}^2}^* \cup L_{\mathbb{Z}^2}^* ) \cap \mathbb{Z}^2$.
	See Fig. \ref{Fig:figure4}.
	
	\begin{figure}[H]                       
		\includegraphics[width=0.58\textwidth]{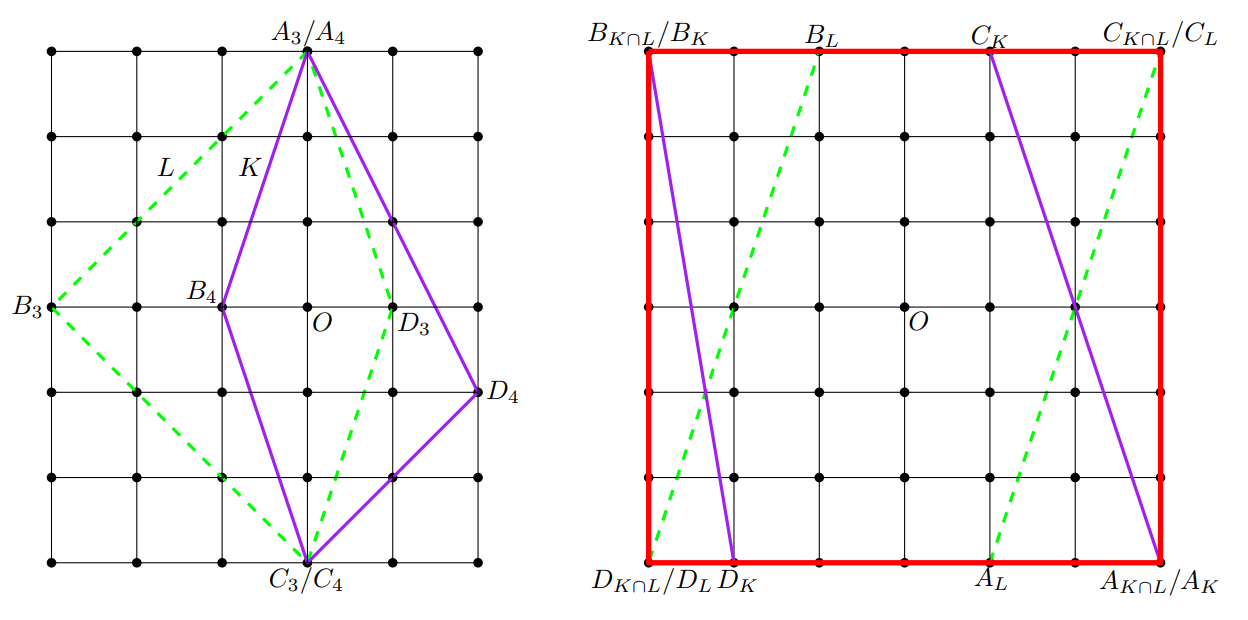}
		\caption{\label{Fig:figure4}$K$ (purple) and $L$ (green), \  $ \mathrm{conv} (K_{\mathbb{Z}^2}^* \cup  L_{\mathbb{Z}^2}^* ) \cap \mathbb{Z}^2$ and $(K \cap L)_{\mathbb{Z}^2}^*$ (from left) }
	\end{figure}
\end{example}

From now on, we consider the intersection of the polar of convex lattice sets.

\begin{theorem}\label{key5}
	If $K,L \in \mathscr{C}_{\mathbb{Z}}^n$, $K \subseteq L$ or $L \subseteq K$, then 
	$$(K \cup L)_{\mathbb{Z}^n}^* = K_{\mathbb{Z}^n}^* \cap L_{\mathbb{Z}^n}^*.$$
\end{theorem}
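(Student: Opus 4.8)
The plan is to exploit the order-reversing property of the polar already established in Theorem \ref{key4}, together with the observation that the nesting hypothesis collapses the union $K \cup L$ onto a single member of $\mathscr{C}_{\mathbb{Z}}^n$. By the symmetry of the statement under interchanging $K$ and $L$, I would assume without loss of generality that $K \subseteq L$; the case $L \subseteq K$ then follows verbatim after swapping the two roles.

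First I would record that, as sets of lattice points, $K \subseteq L$ gives $K \cup L = L$. Since $L \in \mathscr{C}_{\mathbb{Z}}^n$ is a genuine convex lattice set, its polar is defined by Definition \ref{key2}, and hence
\[
(K \cup L)_{\mathbb{Z}^n}^* = L_{\mathbb{Z}^n}^*.
\]
This step is where the nesting hypothesis is essential: without it $K \cup L$ need not be convex, so the left-hand side $(K \cup L)_{\mathbb{Z}^n}^*$ would not even be well defined. Thus the role of the hypothesis is precisely to guarantee that $K \cup L$ lies in $\mathscr{C}_{\mathbb{Z}}^n$. Next I would invoke Theorem \ref{key4}: from $K \subseteq L$ we obtain $L_{\mathbb{Z}^n}^* \subseteq K_{\mathbb{Z}^n}^*$, and consequently
\[
K_{\mathbb{Z}^n}^* \cap L_{\mathbb{Z}^n}^* = L_{\mathbb{Z}^n}^*.
\]
Combining the two displays yields $(K \cup L)_{\mathbb{Z}^n}^* = K_{\mathbb{Z}^n}^* \cap L_{\mathbb{Z}^n}^*$, which is the desired identity.

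The main obstacle is conceptual rather than computational: one must notice that the hypothesis ``$K \subseteq L$ or $L \subseteq K$'' is exactly what is needed for $K \cup L$ to remain a convex lattice set, so that the statement reduces to the inclusion-reversal of Theorem \ref{key4}. An alternative, more laborious route would mirror the proof of Theorem \ref{pdf1}: represent $K$ and $L$ through their bounding hyperplanes (\ref{eq4}) and (\ref{eq5}), describe $K_{\mathbb{Z}^n}^*$ and $L_{\mathbb{Z}^n}^*$ as in (\ref{pdf2}) and (\ref{pdf3}) via the associated vertex sets $U_{P_{\beta_1}}, U_{P_{\beta_2}}, U_{Q_{\beta_1}}, U_{Q_{\beta_2}}$ in the levels $x_n = -\beta_1$ and $x_n = -\beta_2$, and then compare the intersection level-by-level using Lemma \ref{q1} and Lemma \ref{b2}. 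However, this only re-derives the order-reversal already packaged in Theorem \ref{key4}, so the short argument above is preferable.
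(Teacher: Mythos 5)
Your proof is correct and takes essentially the same route as the paper's: the paper likewise assumes $K \subseteq L$ without loss of generality, invokes Theorem \ref{key4} to obtain $L_{\mathbb{Z}^n}^* \subseteq K_{\mathbb{Z}^n}^*$, and then concludes at once. You merely make explicit the two collapses $(K \cup L)_{\mathbb{Z}^n}^* = L_{\mathbb{Z}^n}^*$ and $K_{\mathbb{Z}^n}^* \cap L_{\mathbb{Z}^n}^* = L_{\mathbb{Z}^n}^*$ that the paper compresses into the word ``Obviously''.
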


\begin{proof}
	Without loss of generality, we assume that $K \subseteq L$. By Theorem \ref{key4}, $K \subseteq L$ implies $L_{\mathbb{Z}^n}^* \subseteq K_{\mathbb{Z}^n}^*$. Obviously, we get that $$(K \cup L)_{\mathbb{Z}^n}^* = K_{\mathbb{Z}^n}^* \cap L_{\mathbb{Z}^n}^*.$$
\end{proof}

For $K, L \in \mathscr{C}_{\mathbb{Z}}^n$, 
we assume that $K$ is generated by the intersection of hyperplanes as follows: 
\begin{equation}\label{a11}
	\begin{aligned}
		h_{\beta_1 i}(x) & = b_{i 1}x_1 + \cdots + b_{i (n-1)}x_{n-1} + \beta_1, \ {\rm{for}} \ \beta_1 > 0, i = 1, \cdots, m_1, \\
		h_{\beta_2 l}(x) & = d_{l 1}x_1 + \cdots + d_{l (n-1)}x_{n-1} + \beta_2, \ {\rm{for}} \ \beta_2 < 0, l = 1, \cdots, m_2,
	\end{aligned}
\end{equation}
with $b_{i j}, d_{l j}, \beta_1, \beta_2 \in \mathbb{Z}$ for $j = 1, \cdots, n - 1$.

Analgously, $L$ is bounded by the following hyperplanes:
\begin{equation}\label{a12}
	\begin{aligned}
		g_{\beta_1 u}(x) & = c_{u 1}x_1 + \cdots + c_{u (n - 1)}x_{n - 1} + \beta_1, \ {\rm{for}} \ \beta_1 > 0, u = 1, \cdots, s_1,\\
		g_{\beta_2 v}(x) & = e_{v 1}x_1 + \cdots + e_{v (n - 1)}x_{n - 1} + \beta_2, \ {\rm{for}} \ \beta_2 < 0,v = 1, \cdots, s_2,
	\end{aligned}
\end{equation}
with $c_{u j}, e_{v j}, \beta_1, \beta_2 \in \mathbb{Z}$ for $j = 1, \cdots, n - 1$.

In the following, we shall now give two special convex lattice sets $K$ and $L$ satisfying the following conditions:
\begin{enumerate}
	\item \label{k1} $K$ and $L$ are bounded by the hyperplanes (\ref{a11}) and (\ref{a12}), respectively.
	\item \label{k2} The coefficients of hyperplanes enclosing $K$ and $L$ satisfying $- b_{j 1} > b_{j 2}$ or $b_{j 2} > - b_{j 1}$ with $b_{j 2} \leq -1, b_{j 1} \geq 1$ for $j = 1, \cdots, n - 1$.
\end{enumerate} 

Conditions \ref{k1} and \ref{k2} are equivalent to Table \ref{Fig:figure16}.

\begin{sidewaystable}
	\caption{The coefficients of hyperplanes enclosing $K$ and $L$}\label{Fig:figure16}
	\begin{threeparttable}
		\begin{tabular}{ccccccccccccccc}
			\hline\hline\noalign{\smallskip}	
			&	&  & \multicolumn{5}{@{}c@{}}{$K$}&   & \multicolumn{5}{@{}c@{}}{$L$} & \\ \cmidrule{4-8}\cmidrule{10-14}%
			Blocks	&Characteristics of coordinates  &    & $x_1$    & $x_2$  &$x_3$  & $\cdots$   & $x_{n - 1}$ & & $x_1$ & $x_2$ &$x_3$ & $\cdots$ & $x_{n - 1}$ & $x_n$   \\
			\noalign{\smallskip}\hline\noalign{\smallskip}
			$I_1 \cap \{x_n \leq \beta_1\}$   &	$(+, +, +, \cdots, +, +)$  &  & $b_{12}$    & $b_{22}$ & $b_{32}$   & $\cdots$ & $b_{(n-1)2}$ & & $-b_{11}$ & $-b_{21}$ & $-b_{31}$  & $\cdots$ & $-b_{(n-1)1}$  & $\beta_1$ \\
			$I_2 \cap \{x_n \leq \beta_1\}$	&$(-, +, +, \cdots, +, +)$  &  & $b_{11}$    & $b_{22}$ & $b_{32}$ & $\cdots$ & $b_{(n-1)2}$ & &$-b_{12}$ & $-b_{21}$ & $-b_{31}$ & $\cdots$ & $-b_{(n-1)1}$& $\beta_1$ \\
			$I_3 \cap \{x_n \leq \beta_1\}$	&$(-, +, -, \cdots, +, +)$  &  & $b_{11}$    & $b_{22}$ & $b_{31}$ & $\cdots$ & $b_{(n-1)2}$ & &$-b_{12}$ & $-b_{21}$ & $-b_{32}$ & $\cdots$ & $-b_{(n-1)1}$& $\beta_1$ \\
			$\vdots$&$\vdots$             &     & $\vdots$    & $\vdots$ & $\vdots$ & $\vdots$  & $\vdots$ &  &$\vdots$ &$\vdots$  & $\vdots$ & $\vdots$& $\vdots$ & $\vdots$  \\
			$I_{2^{n - 2} + 1} \cap \{x_n \leq \beta_1\}$   &	$(-, -, -, \cdots, -, +)$  &  & $b_{11}$    & $b_{21}$ & $b_{31}$  & $\cdots$ & $b_{(n-1)1}$ & & $-b_{12}$ & $-b_{22}$ & $-b_{32}$ & $\cdots$ & $-b_{(n-1)2}$  & $\beta_1$ \\
			$I_{2^{n - 2} + 2} \cap \{x_n \leq \beta_1\}$   &	$(+, -, +, \cdots, +, +)$  &  & $b_{12}$    & $b_{21}$ & $b_{32}$ & $\cdots$ & $b_{(n-1)2}$ & & $-b_{11}$ & $-b_{22}$ & $-b_{31}$& $\cdots$ & $-b_{(n-1)1}$  & $\beta_1$ \\
			$I_{2^{n - 2} + 3} \cap \{x_n \leq \beta_1\}$   &	$(+, +, -, \cdots, +, +)$  &  & $b_{12}$    & $b_{22}$ & $b_{31}$ & $\cdots$ & $b_{(n-1)2}$ & & $-b_{11}$ & $-b_{21}$ & $-b_{32}$& $\cdots$ & $-b_{(n-1)1}$  & $\beta_1$ \\
			$\vdots$&$\vdots$             &     & $\vdots$    & $\vdots$ & $\vdots$  & $\vdots$ & $\vdots$ & &$\vdots$ &$\vdots$ &$\vdots$  & $\vdots$ & $\vdots$ & $\vdots$  \\
			$I_{2^{n - 1}} \cap \{x_n \leq \beta_1\}$	&$(+, -, -,\cdots, -, +)$  &  & $b_{12}$    & $b_{21}$ & $b_{31}$  & $\cdots$ & $b_{(n-1)1}$ &  &$-b_{11}$ &$-b_{22}$ & $-b_{32}$ & $\cdots$ & $-b_{(n-1)2}$ & $\beta_1$ \\
			$I_{2^{n - 1} + 1} \cap \{x_n \geq \beta_2\}$ &$(+, +,+, \cdots, +, -)$ &   & $b_{11}$    & $b_{21}$ & $b_{31}$ & $\cdots$ & $b_{(n-1)1}$&  & $-b_{12}$ &$-b_{22}$ & $-b_{32}$& $\cdots$ & $-b_{(n-1)2}$& $\beta_2$ \\
			$I_{2^{n - 1} + 2} \cap \{x_n \geq \beta_2\}$ &$(-, +, +, \cdots, +, -)$ &   & $b_{12}$    & $b_{21}$ & $b_{31}$ & $\cdots$ & $b_{(n-1)1}$&  & $-b_{11}$ &$-b_{22}$ & $-b_{32}$& $\cdots$ & $-b_{(n-1)2}$& $\beta_2$ \\
			$I_{2^{n - 1} + 3} \cap \{x_n \geq \beta_2\}$ &$(-, +, -, \cdots, +, -)$ &   & $b_{12}$    & $b_{21}$ & $b_{32}$ & $\cdots$ & $b_{(n-1)1}$&  & $-b_{11}$ &$-b_{22}$ & $-b_{31}$& $\cdots$ & $-b_{(n-1)2}$& $\beta_2$ \\
			$\vdots$&$\vdots$             &     & $\vdots$ & $\vdots$   & $\vdots$   & $\vdots$ & $\vdots$ & &$\vdots$ &$\vdots$&$\vdots$  & $\vdots$ & $\vdots$ & $\vdots$  \\
			$I_{3 \cdot 2^{n - 2} + 1} \cap \{x_n \geq \beta_2\}$ &$(-, - ,-, \cdots, -, -)$ &   & $b_{12}$    & $b_{22}$  & $b_{32}$& $\cdots$ & $b_{(n-1)2}$&  & $-b_{11}$ &$-b_{21}$ & $-b_{31}$& $\cdots$ & $-b_{(n-1)1}$& $\beta_2$ \\
			$I_{3 \cdot 2^{n - 2} + 2} \cap \{x_n \geq \beta_2\}$ &$(+, -, +,\cdots, +, -)$ &   & $b_{11}$    & $b_{22}$ & $b_{31}$ & $\cdots$ & $b_{(n-1)1}$&  & $-b_{12}$ &$-b_{21}$ & $-b_{32}$& $\cdots$ & $-b_{(n-1)2}$& $\beta_2$ \\
			$I_{3 \cdot 2^{n - 2} + 3} \cap \{x_n \geq \beta_2\}$ &$(+, +,-, \cdots, +, -)$ &   & $b_{11}$    & $b_{21}$ & $b_{32}$ & $\cdots$ & $b_{(n-1)1}$&  & $-b_{12}$ &$-b_{22}$ & $-b_{31}$& $\cdots$ & $-b_{(n-1)2}$& $\beta_2$ \\
			$\vdots$ &$\vdots$           &       & $\vdots$ & $\vdots$   & $\vdots$ & $\vdots$  & $\vdots$  &   & $\vdots$  & $\vdots$& $\vdots$ & $\vdots$& $\vdots$ & $\vdots$ \\
			$I_{2^n} \cap \{x_n \geq \beta_2\}$ &$(+,  -, -,\cdots, -, -)$ &   & $b_{11}$    & $b_{22}$ & $b_{32}$   & $\cdots$ & $b_{(n-1)2}$ & &$-b_{12}$ &$-b_{21}$ & $-b_{31}$& $\cdots$  & $-b_{(n-1)1}$ & $\beta_2$ \\
			\noalign{\smallskip}\hline
		\end{tabular}
		\begin{tablenotes}    
			\footnotesize               
			\item[1] From Table \ref{Fig:figure16}, for any two blocks with opposite coordinate signs, it is easy to get that the hyperplanes enclosing $K$ are parallel to each other. In other words, Table \ref{Fig:figure16} gives a one-to-one correspondence between the any two blocks with opposite coordinate signs and the coefficients of hyperplanes. 
			For example, in $I_1 \cap \{x_n \leq \beta_1\}$ and $I_{3 \cdot 2^{n - 2} + 1} \cap \{x_n \geq \beta_2\}$, the coefficients of hyperplanes are equal, and then, it is easy to get that the two hyperplanes are parallel.
			Obviously,  
			$K$ is centrally symmetric. 
			Analogously, by Table \ref{Fig:figure16}, 
			$L$ is also centrally symmetric. 
			Clearly, by the definitions of $K$ and $L$, $K$ and $L$ have the same center of symmetry (possibly not the origin).
			\item[2] According to Table \ref{Fig:figure16}, we know that $m_1 = m_2 = s_1 = s_2 = 2^{n - 1}$, $b_{j 1} \geq 1, b_{j 2} \leq -1$ and $b_{j 1}, b_{j 2} \in \mathbb{Z}$ for $ j = 1, \cdots, n - 1$.
		\end{tablenotes}     
	\end{threeparttable}
\end{sidewaystable}

Fig. \ref{Fig:figure14} shows, as two examples, the coefficients of hyperplanes enclosing $K$ and $L$ satisfying $- b_{j 1} > b_{j 2}$ or $b_{j 2} > - b_{j 1}$, respectively, for $n = 2$. 
\begin{figure}[h]
	\centering
	\subfigure[$b_{j 2} > - b_{j 1}$]{
		\label{Fig:figure17}
		\includegraphics[width=0.25\textwidth]{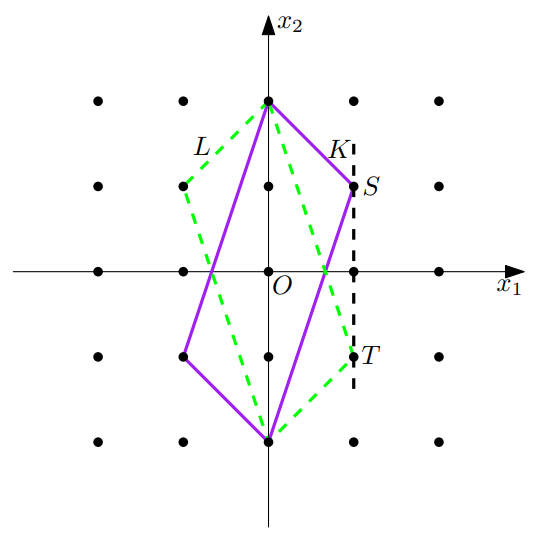}}
	\subfigure[$- b_{j 1} > b_{j 2}$]{
		\label{Fig:figure18}
		\includegraphics[width=0.25\textwidth]{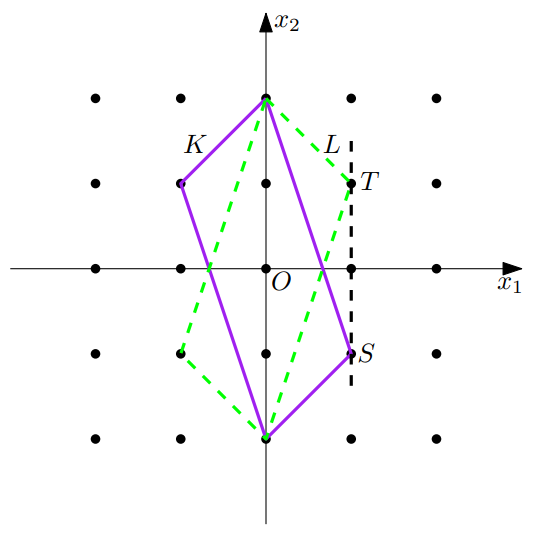}}
	\caption{$K$ (purple) and $L$ (green) in $\mathbb{Z}^2$ }
	\label{Fig:figure14}
\end{figure}

\begin{remark}
	Following Table \ref{Fig:figure16}, in $x_n \geq 0$, for any two blocks whose coordinate signs are opposite except for the $n$-th coordinate, we get that the coefficients of hyperplanes enclosing $K$ and $L$ are opposite to each other, and then, we see that the hyperplanes bounded by $K$ and $L$ are symmetric with respect to the $x_n$-axis, see, e.g., $I_1 \cap \{x_n \leq \beta_1\}$ and $I_{2^{n - 2} + 1} \cap \{x_n \leq \beta_1\}$. The claim for $x_n < 0$ follows by a similar argument, see, e.g., $I_{2^{n - 1} + 1} \cap \{x_n \geq \beta_2\}$ and $I_{3 \cdot 2^{n - 2} + 1} \cap \{x_n \geq \beta_2\}$.  
	For given a convex lattice set $K$, we obtain $L$ by a symmetry transformation of $K$ with respect to the $x_n$-axis. 
\end{remark}

\bigskip

Denote by $bd(K)$ the boundary of $K$. For $\mathrm{conv}(K \cup L)$, it is generated by the intersection of hyperplanes, and there exist facets parallel to the $x_n$-axis in $bd(\mathrm{conv}(K \cup L) )$.
More specifically, the facets in $bd(\mathrm{conv}(K \cup L) )\backslash \{bd(K), bd(L)\}$ lie in hyperplanes parallel to the $x_n$-axis. This result is summarized in the following theorem.

\begin{theorem}\label{key8}
	If $K,L \in \mathscr{C}_{\mathbb{Z}}^n$, the coefficients of hyperplanes enclosing $K$ and $L$ satisfy Table \ref{Fig:figure16}, then all the facets in $bd(\mathrm{conv}(K \cup L) )\backslash \{bd(K), bd(L)\}$ lie in hyperplanes parallel to the $x_n$-axis.
\end{theorem}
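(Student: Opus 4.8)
The plan is to reformulate the statement as a dichotomy on the outer normal. Write $\pi(x)=(x_1,\dots,x_{n-1})$ for the projection that forgets the last coordinate, and for $y$ in the shadow let $\phi_+^K(y)=\max\{x_n:(y,x_n)\in K\}$ and $\phi_-^K(y)=\min\{x_n:(y,x_n)\in K\}$ be the roof and floor functions of $K$, similarly for $L$. Since $K,L\in\mathscr{C}_{\mathbb{Z}}^n$ are bounded only by the hyperplanes of \eqref{a11}, \eqref{a12}, i.e.\ by roofs and floors, neither $K$ nor $L$ has a facet parallel to the $x_n$-axis; hence a facet of $\mathrm{conv}(K\cup L)$ with outer normal $\nu=(\nu_1,\dots,\nu_n)$ satisfying $\nu_n=0$ automatically lies in $bd(\mathrm{conv}(K\cup L))\setminus\{bd(K),bd(L)\}$. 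The content of the theorem is the converse, so my goal will be to prove that \emph{every} facet with $\nu_n\neq 0$ is already contained in $bd(K)$ or $bd(L)$.

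First I would record the symmetry that Table \ref{Fig:figure16} and the preceding remark encode: $L$ is the reflection of $K$ in the $x_n$-axis and both are centrally symmetric about a common centre $z=(0,\dots,0,z_n)$ on that axis, so that $L$ is equally the reflection of $K$ in the horizontal hyperplane $\{x_n=z_n\}$. Because reflection in the $x_n$-axis negates $(x_1,\dots,x_{n-1})$ while $\mathrm{conv}(K)$ projects to a centrally symmetric set, $K$ and $L$ have the \emph{same} image $S=\pi(K)=\pi(L)$, whence $\pi(\mathrm{conv}(K\cup L))=S$. This is the structural fact that produces verticality: a facet with $\nu_n=0$ projects onto a facet of $S$ and is a vertical wall over $\partial S$, whereas a facet with $\nu_n\neq 0$ projects onto a full-dimensional piece of $S$ and so is a maximal affine piece of the upper (if $\nu_n>0$) or lower (if $\nu_n<0$) boundary of $\mathrm{conv}(K\cup L)$ over $S$.

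Next I would treat the upper boundary, the case $\nu_n<0$ following by the reflection $\{x_n=z_n\}$. Over $S$ this boundary is the concave envelope of $\max(\phi_+^K,\phi_+^L)$, and I would analyse it orthant by orthant using the block decomposition of $\mathbb{E}^n$ into the $I_t$ and the explicit signs of Table \ref{Fig:figure16}. In each orthant the active roof of $K$ has gradient whose $j$-th entry is $b_{j2}$ or $b_{j1}$ according to the sign of $x_j$, and the roof of $L$ is the corresponding reflected datum; the hypotheses $b_{j1}\ge 1,\ b_{j2}\le -1$ and $-b_{j1}>b_{j2}$ (or $b_{j2}>-b_{j1}$) together with central symmetry determine, in each orthant, which body attains the larger height. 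The aim is to show that the region where $K$ dominates and the region where $L$ dominates meet along the locus $\{\phi_+^K=\phi_+^L\}$ with a downward (concave) bend, so that $\max(\phi_+^K,\phi_+^L)$ is itself concave, equals the upper boundary of $\mathrm{conv}(K\cup L)$, and has each maximal affine piece equal to a single roof of $K$ or of $L$. Then every facet with $\nu_n>0$ lies in $bd(K)$ or $bd(L)$; combined with the lower case, this leaves exactly the facets with $\nu_n=0$, i.e.\ the vertical walls over the facets of $S$, which is the assertion.

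The main obstacle is precisely this orthant-by-orthant domination analysis. One must verify, uniformly across all $2^{n-1}$ upper orthants, that the crossing of the two roof graphs is concave rather than convex; if the bend were upward, the convex hull would bulge strictly above both $K$ and $L$ and create a \emph{new, non-vertical} roof facet, defeating the claim. Carrying the planar comparison of Fig.\ \ref{Fig:figure14} through to general $n$ is the delicate point: it is exactly here that the constraints $b_{j1}\ge 1$, $b_{j2}\le -1$ and $|b_{j1}|\neq|b_{j2}|$, rather than being incidental, must be used to pin down the sign of the gradient jump across each coordinate wall and to guarantee that the dominance regions of $K$ and $L$ fit together without such a bulge.
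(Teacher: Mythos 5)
Your proposal is not yet a proof: the step that carries the entire theorem is the one you defer. You correctly reformulate the claim as ``every facet with $\nu_n\neq 0$ is already a roof or floor facet of $K$ or of $L$,'' and you reduce this to showing that the upper envelope $\max(\phi_+^K,\phi_+^L)$ is concave (dually for the floors). But you then label exactly this verification ``the main obstacle'' and ``the delicate point'' and never carry it out, so what remains is a reformulation plus a plan. For comparison, the paper's own argument never touches envelope concavity: it takes vertices $S\in K^v$ and $T\in L^v$ off the $x_n$-axis, sums the equations of the roof and floor hyperplanes passing through each, and uses the coefficient pairing of Table \ref{Fig:figure16}, namely $b_{ij}-d_{lj}=c_{uj}-e_{vj}=\pm(b_{j2}-b_{j1})$, to conclude that $S-T$ satisfies $\sum_{j=1}^{n-1}\pm(b_{j2}-b_{j1})x_j=0$, an equation whose normal has vanishing $n$-th coordinate; the new facets, being spanned by such pairs, are then declared parallel to the $x_n$-axis.

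Moreover, the concavity you need is not a routine computation waiting to be filled in; on the data that Table \ref{Fig:figure16} allows, it fails. From the table, $\phi_+^K(y)=\beta_1+\sum_j\min(b_{j1}y_j,b_{j2}y_j)$ and $\phi_+^L(y)=\phi_+^K(-y)$, so the difference $\phi_+^K-\phi_+^L$ is the \emph{globally linear} function $\sum_j(b_{j1}+b_{j2})y_j$, all of whose coefficients are nonzero precisely because of the hypothesis $-b_{j1}>b_{j2}$ or $b_{j2}>-b_{j1}$. When $n=2$ the crossing locus is therefore the single point $y_1=0$, where both roofs change slope, and the envelope is genuinely concave --- this is the planar picture of Fig.\ \ref{Fig:figure14} that you extrapolate from. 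But when $n\geq 3$ the crossing hyperplane $\sum_j(b_{j1}+b_{j2})y_j=0$ passes through the interior of mixed-sign orthants (for instance the ray $y_2=-y_1$, $y_1>0$, when the coefficients do not depend on $j$), and on the interior of an orthant both roofs are affine; the maximum of two transversally crossing affine functions has a \emph{convex} kink, never a concave one. Concretely, with $b_{j1}=1$, $b_{j2}=-2$ the gradients on the two sides are $(-2,1)$ and $(-1,2)$, so the slope in the direction $(1,1)$ jumps from $-1$ to $+1$; the hull then bulges strictly above both roofs along the crossing, which is exactly the ``new, non-vertical roof facet'' you admit would defeat the claim. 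So your plan, executed honestly, runs into a genuine obstruction at the deferred step, and any rescue must invoke constraints you never identify --- in particular the requirement, hidden in membership of $\mathscr{C}_{\mathbb{Z}}^n$, that the displayed union of half-space intersections actually yield a \emph{convex} lattice set, which is far from automatic under the stated coefficient conditions. The proposal neither supplies this input nor names it, so the gap is real.
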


\begin{proof}
	Let $M_0$ be the intersection of $\mathrm{conv}(K \cup L)$ with the positive part of $x_n$-axis and $N_0$ be the intersection of $\mathrm{conv}(K \cup L)$ with the negative part of $x_n$-axis, and then, by the definitions of $K$ and $L$, we see that the coordinates of $M_0$ and $N_0$ are $(0, \cdots, 0, \beta_1)$ and $(0, \cdots, 0, \beta_2)$, respectively. Obviously, by the definitions of $K$ and $L$, it is easy to get that $M_0, N_0 \in K^v$ and $M_0, N_0 \in L^v$. 
	
	Now suppose $S,T \in [\mathrm{conv}(K \cup L)]^v$. 
	Without loss of generality, we assume that $S \in K^v \backslash \{M_0, N_0\}$ with the coordinate $(a_1, \cdots, a_{n - 1}, a_{n})$, and $T \in L^v \backslash \{M_0, N_0\}$ with the coordinate $(k_1, \cdots, k_{n - 1}, k_{n})$, where $a_t, k_t \in \mathbb{Z}$ for $1 \leq t \leq n$. 
	
	For $K \nsubseteq L$ or $L \nsubseteq K$, from the definitions of $K$ and $L$, we conclude that $S \neq T$, and then, there exist other faces belonging to $bd(\mathrm{conv}(K \cup L))$, see Fig. \ref{Fig:figure14}.
	
	For $S$, according to sign characteristics of the coefficients of hyperplanes (\ref{a11}) in Table \ref{Fig:figure16}, we have
	\begin{align*}
		b_{i 1} a_1 + b_{i 2} a_2 + \cdots + b_{i (n - 1)} a_{n - 1} + \beta_1 = a_n, \ i = 1, \cdots, \frac{m_1}{2}, \\
		d_{l 1} a_1 + d_{l 2} a_2 + \cdots + d_{l (n - 1)} a_{n - 1} + \beta_2 = a_n, \ l = 1, \cdots, \frac{m_1}{2}.
	\end{align*}
	
	By summing over $i$, we obtain 
	\begin{align}\label{a13}
		\sum_{i = 1}^{\frac{m_1}{2}} \sum_{j = 1}^{n - 1} b_{i j}a_j + \frac{m_1}{2} \beta_1 = \frac{m_1}{2}a_n.
	\end{align}
	Similar to (\ref{a13}), for $l$, we get that
	\begin{align}\label{a14}
		\sum_{l = 1}^{\frac{m_1}{2}} \sum_{j = 1}^{n - 1} d_{l j}a_j + \frac{m_1}{2} \beta_2 = \frac{m_1}{2}a_n.
	\end{align}
	
	Together with (\ref{a13}) and (\ref{a14}), we have
	\begin{align}\label{a17}
		\sum_{i, l = 1}^{\frac{m_1}{2}} \sum_{j = 1}^{n - 1} (b_{i j} - d_{l j})a_j + \frac{m_1}{2}(\beta_1 - \beta_2) = 0.
	\end{align}
	
	The claim for $T$ follows a similar argument
	\begin{align}\label{a18}
		\sum_{u,v= 1}^{\frac{m_1}{2}} \sum_{j = 1}^{n - 1}(c_{u  j} - e_{v j} )e_j + \frac{m_1}{2} (\beta_1 - \beta_2) = 0.
	\end{align}
	
	For $K$, by Table \ref{Fig:figure16}, we have
	\begin{align*}
		b_{i j} - d_{l j} = b_{j 2} - b_{j 1} \  {\rm{or}} \ b_{i j} - d_{l j} = b_{j 1} - b_{j 2}
	\end{align*}
	in $I_t \cup I_{2^{n - 1} + t}$ for $1 \leq t \leq 2^{n - 1}$. 
	
	\textbf{Case 1:} $b_{i j} - d_{l j} = b_{j 2} - b_{j 1} $.
	Then, by Table \ref{Fig:figure16}, we have
	\begin{align}\label{a20}
		c_{u j} - e_{v j} = - b_{j 1} - (- b_{j 2}) = b_{j 2} - b_{j 1}.
	\end{align}
	
	\textbf{Case 2:} $b_{i j} - d_{l j} = b_{j 1} - b_{j 2}$.
	It follows from Table \ref{Fig:figure16} that
	\begin{align}\label{a21}
		c_{u j} - e_{v j} = -b_{j 2} - (-b_{j 1}) = b_{j 1} - b_{j 2}.
	\end{align}
	
	For $L$, by Table \ref{Fig:figure16}, it is easy to get that
	\begin{align}\label{a22}
		c_{u j} - e_{v j} = b_{j 2} - b_{j 1} \ {\rm{or}} \ c_{u j} - e_{v j} = b_{j 1} - b_{j 2}
	\end{align}
	in $I_t \cup I_{2^{n - 1} + t}$ for $1 \leq t \leq 2^{n - 1}$.
	
	It follows from (\ref{a20}), (\ref{a21}) and (\ref{a22}) that
	\begin{align}\label{a19}
		b_{i j} - d_{l j} = c_{u j} - e_{v j}
	\end{align}
	in $I_t \cup I_{2^{n - 1} + t} $ for $ 1 \leq t \leq 2^{n - 1}$.
	
	Then, by (\ref{a17}), (\ref{a18}) and (\ref{a19}), we have
	\begin{align*}
		\sum_{i,l = 1}^{\frac{m_1}{2}} \sum_{j = 1}^{n - 1} (b_{i j} - d_{l j}) (a_j - e_j) = 0
	\end{align*}
	for $1 \leq i, l \leq \frac{m_1}{2}$, $1 \leq j \leq n - 1$.
	
	Consequently, we have
	\begin{align*}
		\sum_{j = 1}^{n - 1} \pm(b_{j 2} - b_{j 1}) (a_j - e_j) = 0.
	\end{align*}
	
	Note that $b_{j 2} - b_{j 1} \neq 0$. Then, we see that $\overrightarrow{ST}$ satisfies 
	\begin{align}\label{a23}
		\sum_{j = 1}^{n - 1} \pm(b_{j 2} - b_{j 1})x_j = 0
	\end{align}
	with $S, T \in I_t \cup I_{2^{n - 1} + t}$ for $ t = 1, \cdots, 2^{n - 1}$, $j = 1, \cdots, n - 1$.
	
	It follows from the definitions of $K,L$ and (\ref{a23}) that the $\overrightarrow{ST}$ lies on the hyperplanes strictly parallel to the $x_n$-axis.
	That is, all the facets in $bd(\mathrm{conv}(K \cup L))\backslash \{bd(K), bd(L)\}$ lie in hyperplanes parallel to the $x_n$-axis.
	
\end{proof}

The cross-polytope $K$ is defined by
$$K = \mathrm{conv}\{(\pm 1,0, \cdots, 0), \cdots, (0, \cdots, 0, \pm 1)\}.$$
$K$ is not only a $V$-polytope, but also a $H$-polytope, i.e., 
$$\{x \in \mathbb{E}^n :  |x_1| + \cdots + |x_n| \leq 1  \}.$$
Obviously, we have $K \in \mathscr{C}_{\mathbb{Z}}^n$.

In the following, a theorem characterizes the cross-polytope with the discrete Mahler product.

\begin{theorem}\label{third1}
	If $K \in \mathscr{C}_{\mathbb{Z}}^n$ is origin-symmetric, then $K$ is a cross-polytope if and only if its discrete Mahler product is the smallest.
\end{theorem}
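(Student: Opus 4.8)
The plan is to prove the equivalence by pinning down the exact value of the discrete Mahler product of the cross-polytope and then showing that no other origin-symmetric member of $\mathscr{C}_{\mathbb{Z}}^n$ can match or undercut it. First I would compute the two factors for the cross-polytope $K=\mathrm{conv}\{\pm e_1,\dots,\pm e_n\}$. Its integer points are exactly the origin together with the $2n$ vertices $\pm e_i$, so $\#(K)=2n+1$. For the polar, each of the $2^n$ facets $\varepsilon_1x_1+\cdots+\varepsilon_n x_n=1$ (with $\varepsilon\in\{\pm1\}^n$) is a graph over $(x_1,\dots,x_{n-1})$, and reading off the data in (\ref{eq21}) gives the generators $K_0=\{\pm1\}^n$. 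Since the scaling $\lambda$ and the map $\mathcal{T}=\tfrac1\lambda$ cancel, one has $K_{\mathbb{Z}^n}^\ast=K_L=\mathrm{conv}(K_0)\cap\mathbb{Z}^n=[-1,1]^n\cap\mathbb{Z}^n$, the discrete cube, so $\#(K_{\mathbb{Z}^n}^\ast)=3^n$. Hence the cross-polytope realizes the product $(2n+1)\,3^n$, which is the value I aim to show is the smallest.

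Second, I would establish the cardinality bound $\#(K)\ge 2n+1$ for an arbitrary origin-symmetric $K\in\mathscr{C}_{\mathbb{Z}}^n$, with equality exactly for the cross-polytope. The engine here is that every such $K$ contains the cross-polytope. Indeed, the structure of $\mathscr{C}_{\mathbb{Z}}^n$ forces apexes at $(0,\dots,0,\pm\beta_1)$ with integer $\beta_1\ge1$, so $\pm e_n\in K$; and the equatorial slice $K\cap\{x_n=0\}$ is an origin-symmetric convex lattice set in $\mathbb{Z}^{n-1}$ having the origin in its relative interior, so by induction on the dimension it contains $\pm e_1,\dots,\pm e_{n-1}$. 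Thus the cross-polytope is contained in $K$, and Lemma \ref{q1} yields $\#(K)\ge 2n+1$, with strict inequality whenever $K$ is not the cross-polytope.

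The crux, and the step I expect to be the main obstacle, is obtaining a matching control of the polar factor so as to conclude $\#(K)\,\#(K_{\mathbb{Z}^n}^\ast)\ge(2n+1)3^n$. The difficulty is that enlarging $K$ \emph{increases} $\#(K)$ but, by the inclusion-reversing property of Theorem \ref{key4}, tends to \emph{decrease} $\#(K_{\mathbb{Z}^n}^\ast)$, so a separate bound on each factor is not enough and one must show the trade-off always favors the cross-polytope. I would attack this by analyzing $K_{\mathbb{Z}^n}^\ast=\mathrm{conv}(K_0)\cap\mathbb{Z}^n$ slice-by-slice in the $x_n$-direction: the generators $B_i$ sit in the two hyperplanes $x_n=\pm\beta_1$, so the polar is confined between heights $\pm\beta_1$, and I would estimate the lattice points in each horizontal slice from the facet coefficients and the integrality of $\beta_1$, then feed this back through the self-duality $(K_{\mathbb{Z}^n}^\ast)_{\mathbb{Z}^n}^\ast=K$ of Theorem \ref{key3} to tie the two factors together. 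The equality analysis should then force $\beta_1=1$ and the equatorial section to be the $(n-1)$-dimensional cross-polytope, identifying $K$ as the cross-polytope; carrying out this simultaneous estimate, and making full use of the central symmetry together with the integrality of the bounding data, is where the real work lies.
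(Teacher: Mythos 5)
Your opening computation agrees with the paper ($\#(K)=2n+1$, $K_{\mathbb{Z}^n}^*=[-1,1]^n\cap\mathbb{Z}^n$, product $(2n+1)3^n$), but the proposal has a genuine gap: the decisive inequality $\#(K)\,\#(K_{\mathbb{Z}^n}^*)\ge(2n+1)3^n$ for every origin-symmetric $K\in\mathscr{C}_{\mathbb{Z}}^n$, together with its equality case, is never actually proved. Your third paragraph only describes how you \emph{would} prove it (slice-by-slice counting fed through self-duality) and explicitly defers "the real work," so both directions of the equivalence remain open. Worse, the obstacle you organize the plan around is illusory in this setting. Theorem \ref{key4} reverses inclusions only for sets bounded by hyperplanes with the \emph{same} intercepts $\beta_1,\beta_2$ (this is built into (\ref{eq4})--(\ref{eq5})); when $K$ grows by increasing $t=\beta_1=-\beta_2$, its polar grows too. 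Indeed, for an origin-symmetric member of $\mathscr{C}_{\mathbb{Z}}^n$ the polar is the lattice hull of the coefficient vectors $(\pm b_{1},\dots,\pm b_{n-1},\pm t)$ with all $|b_{j}|, t\ge1$ integers, so it always contains $[-1,1]^n$ and $\#(K_{\mathbb{Z}^n}^*)\ge3^n$ holds unconditionally: there is no trade-off, and the two factors are minimized simultaneously by the cross-polytope. The paper's proof exploits exactly this, via a three-case analysis on the parameters ($|b_{ij}|\ge2$ with $t\ge2$; $|b_{ij}|\ge2$ with $t=1$, which is impossible by full-dimensionality; $|b_{ij}|=1$ with $t\ge2$), showing in each case that both $\#(K_1)$ and $\#((K_1)_{\mathbb{Z}^n}^*)$ strictly exceed the cross-polytope's values.

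Your second paragraph also contains a flawed step. The inductive claim that an origin-symmetric convex lattice set with the origin in its (relative) interior must contain the standard basis vectors is false in general: the parallelogram $\mathrm{conv}\{(0,1),(1,1),(0,-1),(-1,-1)\}\cap\mathbb{Z}^2$ is full-dimensional, origin-symmetric, and has the origin as an interior point, yet contains neither $(1,0)$ nor $(-1,0)$. Since you never show that the equatorial slice $K\cap\{x_n=0\}$ inherits the $\mathscr{C}_{\mathbb{Z}}^{n-1}$-structure, the induction cannot be run inside the restricted class either. What actually rescues the containment of the cross-polytope is the $\mathscr{C}$-structure itself: all top (resp. bottom) bounding hyperplanes pass through the common apex $(0,\dots,0,t)$ (resp. $(0,\dots,0,-t)$), so full-dimensionality of $K$ forces $|b_{ij}|\le t$ for every $j$, which puts each $\pm e_j$ in $K$ directly; this is how the paper obtains cross-polytope $\subset K_1$ in its case analysis.
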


\begin{proof}
	
	Assume that $K$ is origin-symmetric and bounded by 
	\begin{equation*}
		\begin{aligned}
			h_{\beta_1 i}(x) & = b_{i 1}x_1 + \cdots + b_{i (n-1)}x_{n-1} + \beta_1, \ {\rm{for}} \ \beta_1 > 0, i = 1, \cdots, m_1, \\
			h_{\beta_2 i}(x) & = d_{i 1}x_1 + \cdots + d_{i (n-1)}x_{n-1} + \beta_2, \ {\rm{for}} \ \beta_2 < 0, i = 1, \cdots, m_1,
		\end{aligned}
	\end{equation*}
	with $b_{i j}, d_{i j} ,\beta_1, \beta_2 \in \mathbb{Z}$ for $j = 1, \cdots, n - 1$.
	
	Then, for $K$, we have
	\begin{align*}		
		\beta_1 = -\beta_2  \geq 1, \ |b_{i j} | = |d_{i j}| \geq 1, 
	\end{align*}
	for $i = 1, \cdots, m_1$, $j = 1, \cdots, n - 1$.
	
	Consider the cross-polytope $K$ in $\mathbb{E}^n$ given by 
	$$ \{x \in \mathbb{E}^n :  |x_1| + \cdots + |x_n| \leq 1  \}.$$
	Then, in particular, for $K$, we have 
	\begin{align*}
		\beta_1 = -\beta_2  = 1, \
		|b_{i j} | = |d_{i j} | = 1, 
		\ \#(K) = 2n + 1, 
	\end{align*}
	and then, we conclude that
	\begin{equation}\label{conv4}
		\begin{aligned}
			K_{\mathbb{Z}^n}^* 
			& = \mathrm{conv}\{(\pm 1, \cdots, \pm 1, \pm 1), \cdots, (\pm 1, \cdots, \pm 1, \pm 1 ) \}\cap \mathbb{Z}^n.
		\end{aligned}
	\end{equation}
	
	By (\ref{conv4}), we have 
	$$\#(K_{\mathbb{Z}^n}^* ) = \#([-1,1]^n \cap \mathbb{Z}^n) = 3^n,$$ 
	and therefore, for $K$, the discrete Mahler volume is given by
	$$\#(K) \#(K_{\mathbb{Z}^n}^*) = (2n + 1) 3^n.$$
	
	Assume that $K_1 \in \mathscr{C}_{\mathbb{Z}}^n$ and $K_1$ is origin-symmetric, and then, we let $t = \beta_1 = -\beta_2 \geq 1$ and $|b_{i j}| = |d_{i j}| \geq 1$, where $b_{i j}, d_{i j}, \beta_1, \beta_2 \in \mathbb{Z}$. Clearly, when $t = 1$ and $|b_{i j}| = |d_{i j}| = 1$, we have $K_1 = K$. We now separate the proof into three cases.
	
	\textbf{Case 1:} $|b_{i j} |= |d_{i j} | \geq 2, t = \beta_1 = -\beta_2 \geq 2.$
	
	Obviously, we have $K \subset K_1$, and then, $\#(K) < \#(K_1)$. 
	Therefore, a lower bound for $\#((K_1)_{\mathbb{Z}^n}^*)$ is given by
	$$\#((K_1)_{\mathbb{Z}^n}^*) \geq \#([-2,2]^n \cap \mathbb{Z}^n) = 5^{n}.$$
	By the definition of $K_1$, the upper bound of $(K_1)_{\mathbb{Z}^n}^*$ is an $n$-dimensional cube with edge length $2t$. Then, we can see that
	\begin{align*}
		\#((K_1)_{\mathbb{Z}^n}^*) \leq \#([-t, t]^n \cap \mathbb{Z}^n) = (2t + 1)^n,
	\end{align*}
	and thus, we get that 
	\begin{align*}
		5^{n}  \leq \#((K_1)_{\mathbb{Z}^n}^*) \leq (2t + 1)^n.
	\end{align*}
	
	Therefore, for any $n \geq 1$, we have 
	\begin{align*}
		\frac{\#(K_1)\#((K_1)_{\mathbb{Z}^n}^*)}{\#(K) \#(K_{\mathbb{Z}^n}^*)} > \frac{(2n + 1) 5^n}{(2n + 1) 3^n} 
		= \frac{5^n}{ 3^n} > 1,
	\end{align*}
	which implies that $\#(K_1) \#((K_1)_{\mathbb{Z}^n}^*) > \#(K) \#(K_{\mathbb{Z}^n}^*).$
	
	\textbf{Case 2:} $|b_{i j} | = |d_{i j} | \geq 2, t = \beta_1 = -\beta_2 = 1.$
	
	When $ t = \beta_1 = -\beta_2 = 1$, for $K_1 \in \mathscr{C}_{\mathbb{Z}}^n$ and $K_1$ is origin-symmetric, we have $|b_{i j} | = |d_{i j} | \leq 1$, a contradiction.
	
	\textbf{Case 3:} $|b_{i j} | = |d_{i j} | = 1, t = \beta_1 = -\beta_2 \geq 2.$
	
	By the definitions of $K$ and $K_1$, we have $K \subset K_1$, and thus, we get that $\#(K) < \#(K_1)$. For $K_1$, we conclude that
	\begin{align*}
		(K_1)_{\mathbb{Z}^n}^* = \mathrm{conv}\{(\pm 1, \cdots, \pm 1, \pm t), \cdots, (\pm 1, \cdots, \pm 1, \pm t)\} \cap \mathbb{Z}^n,
	\end{align*}
	and then, the upper and lower bounds for $\#((K_1)_{\mathbb{Z}^n}^*)$ are given by
	\begin{align*}
		3^n 2 =  \#([-1,1]^n \cap \mathbb{Z}^n) 2 \leq \#((K_1)_{\mathbb{Z}^n}^*) \leq  \#([-1,1]^n \cap \mathbb{Z}^n) t =  3^n t.
	\end{align*}
	
	Thus, for any $n \geq 1$, we have 
	\begin{align*}
		\frac{\#(K_1)\#((K_1)_{\mathbb{Z}^n}^*)}{\#(K) \#(K_{\mathbb{Z}^n}^*)} > \frac{2 (2n + 1) 3^n}{(2n + 1) 3^n} = 2 > 1,
	\end{align*}
	and then, we get that $\#(K_1) \#((K_1)_{\mathbb{Z}^n}^*) > \#(K) \#(K_{\mathbb{Z}^n}^*).$
	
	By the above argument, $K$ is a cross-polytope if and only if its discrete Mahler product is the smallest.
	
	As shown above, the smallest discrete Mahler product can be calculated by 
	\begin{align*}
		\#(K)\#(K_{\mathbb{Z}^n}^*) = (2n + 1) 3^n.
	\end{align*}
\end{proof}



\renewcommand\refname{Reference}

\vskip 20pt

\noindent Tingting He

\noindent College of Science,
Beijing Forestry University, Beijing 100083,
P.R.China

\noindent E-mail: moment20212021@163.com

\vskip 5pt

\noindent Lin Si

\noindent College of Science,
Beijing Forestry University, Beijing 100083,
P.R.China

\noindent E-mail: silin@bjfu.edu.cn

  \end{document}